\theoremstyle{plain} 
\theoremstyle{remark} 
\theoremstyle{plain} 
\theoremstyle{plain} 
\theoremstyle{plain} 
\theoremstyle{break}
\newtheorem{theorem}{Theorem}
\newtheorem{lemma}[theorem]{Lemma}
\begin{document}

\title[Simultaneous Reconstruction and Segmentation for Dynamic SPECT Imaging]{Simultaneous Reconstruction and Segmentation for Dynamic SPECT Imaging}

\author{Martin Burger$^1$, Carolin Rossmanith$^1$, Xiaoqun Zhang$^2$}
\vspace{10pt}

\address{$^1$ Institute for Computational and Applied Mathematics, University of M\"unster, Einsteinstra\ss e 62, 48149 M\"unster, Germany}
\vspace{10pt}

\address{$^2$ Institute of Natural Sciences, School of Mathematical Sciences, and MOE-LSC, Shanghai Jiao Tong University, 800 Dongchuan Road, 200240 Shanghai, China}
\vspace{10pt}

\ead{martin.burger@wwu.de,xqzhang@sjtu.edu.cn}
\vspace{10pt}

\begin{indented}
\item[]December 2015
\end{indented}

\begin{abstract}
This work deals with the reconstruction of dynamic images that incorporate characteristic dynamics in certain subregions, as arising for the kinetics of many tracers in emission
tomography (SPECT, PET). We make use of a basis function approach for the unknown tracer concentration by assuming  that the region of interest can be divided into subregions with
spatially constant concentration curves. Applying a regularized variational framework reminiscent of the Chan-Vese model for image segmentation we simultaneously reconstruct both the
labelling functions of the subregions as well as the subconcentrations within each region. Our particular focus is on applications in SPECT with  Poisson noise model, resulting in a Kullback-Leibler
data fidelity in the variational approach.

We present a detailed analysis of the proposed variational model and prove existence of minimizers as well as error estimates. The latter apply to a more general class of problems and generalize existing
results in literature since we deal with a nonlinear forward operator and a nonquadratic data fidelity. A computational algorithm based on alternating minimization and splitting techniques is developed for the
solution of the problem and tested on appropriately designed synthetic data sets. For those we compare the results to
those of standard EM reconstructions and investigate the effects of Poisson noise in the data.
\end{abstract}

%
%
%
%
%

\section{Introduction}

\textit{Single Photon Emission Computed Tomography} (SPECT) is a popular medical imaging technique which, like \textit{Positron Emission Tomography} (PET), provides an insight into
the physiological processes of a living organism. A radioactive substance, the so-called radiotracer, is injected into the patient's body and participates in its natural processes
in a way depending on the particular properties of the tracer substance. For example, a radioactive type of glucose might be appropriate to visualize active tumor cells within healthy
tissue. The emission of gamma rays is measured by surrounding gamma cameras, which contain a perforated honeycomb-like sheet, the so-called collimator, to ensure that only photons
from a specified angle are counted. The measurements in turn can be used to gain a visualization of the tracer concentration \cite{Wernick2004}.

Whereas static SPECT imaging yields a single static image neglecting any temporal aspects within a physiological process, dynamic SPECT reconstruction aims at providing a series of
images visualizing time-dependent processes within an organism. From the mathematical point of view, the connection between given measurements
$g$ and the unknown time-dependent tracer concentration $f\left(x,t\right)$ in every point $x$ of a certain region $\Omega\subset\mathbb{R}^d$ for $d=2$ or $d=3$ at time $t$ is given
by the so-called attenuated Radon transform
\begin{equation}
\label{eq:RadonOperator}
g\left(\theta,s,t\right) = \int_{L\left(\theta,s\right)} f\left(x,t\right)e^{-\int_{H\left(x\right)}\mu\left(y\right)dy} dx = \mathcal{R}f\left(\theta,s,t\right)
\end{equation}
where $L\left(\theta,s\right) := \left\{t\theta+s\theta^{\bot}:t\in\mathbb{R}\right\}$ and $s\in\mathbb{R}$ defines the position of a collimator bin while the camera position is
specified by the angle $\theta\in\left[0,2\pi\right)$. $H\left(x\right)$ specifies the section of the straight line $L$ which lies in between $x$ and the collimator and
$\mu:\Omega\rightarrow[0,\infty)$ defines the grade of attenuation of gamma rays on their way through different types of tissue. In this work, we make the assumption that $\mu$ is
known a priori and directly included in the Radon transform. In small animal SPECT, the attenuation map is negligible,
while for human SPECT scans, one usually uses a given attenuation map which is pre-estimated before, for example from an additional scan. Another approach would be to perform a
simultaneous reconstruction of both the tracer concentration and the attenuation map, but this is beyond the scope of this work, hence we can ignore it in the following
\cite{Wuebbeling2001}.

The most intuitive approach to reconstruct $f$ out of given measurements $g$ is to perform static SPECT reconstruction for each time step independently. Obviously, the main drawback
of this idea is the fact that any temporal correlation between the images is neglected. Furthermore, the approach might not provide suitable results: In order to guarantee a good
temporal resolution, we assume that every camera position is accompanied by a new time step and therefore we only have access to measurements from one (or two, depending on the actual
number of rotating cameras) angles per time step. As a result, the solution is strongly underdetermined. We therefore need to search for a reconstruction method which includes the
temporal coherence and allows the integration of given a priori information about the structure of the solution.

There exist several discussions about dynamic image reconstruction in emission tomography. An overview about the state of the art in both PET and SPECT imaging is given in
\cite{Reader2014}. The main reconstruction methods as well as several clinical applications of dynamic SPECT are listed in \cite{Gullberg2010}. The ideas are commonly based on kinetic
modeling (cp. section \ref{sec:SimultaneousSegmentationAndReconstruction}) and aim at minimizing the Kullback-Leibler divergence between the given data and the Radon transform of the
underlying tracer distribution. An interesting approach which is comparable to the one in this paper is presented in \cite{Zan2013}. Here, the authors construct a sparse binary matrix defining
the affiliation of a pixel to a certain region by pre-segmentation of a static reconstructed image, assuming that the tracer concentration within certain regions remains spatially
constant. Afterwards, the concentration in each region is modelled by a linear combination of B-spline basis functions. In \cite{Chen2015}, a simultaneous segmentation and reconstruction approach
of dynamic PET data is performed by decomposing the spatial region into foreground and background regions, which also have a spatially constant concentration curve and differ in the
magnitude of the latter.

The main contribution of this work is the development and analysis of a novel approach for dynamic SPECT reconstruction based on image segmentation. The paper is organized as
follows. In Section \ref{sec:SimultaneousSegmentationAndReconstruction}, we present the proposed simultaneous segmentation and reconstruction model.  Then in Section
\ref{sec:analysis}, the existence of minimizers and error estimates of the proposed variational model are provided. In Section \ref{sec:alg}, we present the numerical algorithm to
solve the proposed variational problems. Finally, the numerical results are presented in Section \ref{sec:num}.


\section{Simultaneous Segmentation and Reconstruction model}
\label{sec:SimultaneousSegmentationAndReconstruction}

In order to include the temporal correlation between the slices of the image sequence, there already exists a common approach which we use a basis for a slightly different model. The
approach is based on compartmental modeling of the region of interest and is described in detail in \cite{Wernick2004}. The region $\Omega$ is assumed to be separated into so-called
compartments: regions, in which the tracer concentration varies in time, but not in space. One compartment can either be one pixel (or voxel) or a whole region consisting of the same
tissue. One of the main model assumption is furthermore that the tracer input is represented by the concentration in the blood, which is modelled as a function of time and is assumed
to be known (since it can be measured separately). Hence, the blood vessel is not seen as a compartment as such, but as an additional region where the concentration is given a priori.

The compartmental modelling starts from a very simple model, which only describes the tracer flux between blood and a single tissue type described by a differential equation for the
unknown concentration in the tissue. In a subsequent approach, the blood compartment is extended by distinguishing between an arterial and a venous part. A relation between these
parts is modelled applying the Fick principle. This leads to an extended differential equation for the concentration in the tissue $C_T$
\begin{equation}
\label{eq:ODEKineticModeling}
\frac{d}{dt}C_T\left(t\right) = D\left(C_A\left(t\right)-C_V\left(t\right)\right)
\end{equation}
where $C_A,C_V$ are the concentration in the arterial or, likewise, the venous part of the blood vessel and $D$ is the blood flow rate (i.e. the volume of blood that passes through a vessel
per unit time). By the assumption that the concentration reaches an equilibrium state very fast, the rate between tissue and venous concentration $\lambda=\frac{C_T}{C_V}$ can be
regarded as a constant, such that substituting $C_V(t)$ by $\frac{C_T(t)}{\lambda}$ leads to
\begin{equation}
\frac{d}{dt}C_T\left(t\right) = D\left(C_A\left(t\right)-\frac{C_T(t)}{\lambda}\right)
\end{equation}
This model can again be extended by separating the tissue
region into several independent compartments and therefore adding a space component to the concentration in the tissue $C_T$. This way, equation (\ref{eq:ODEKineticModeling}) turns
into a partial differential equation
\begin{equation}
\frac{\partial}{\partial t}C_T\left(x,t\right) = D\left(x\right)\left(C_A\left(t\right)-\frac{C_T\left(x,t\right)}{\lambda}\right)
\end{equation}
and the solution with initial value $C_T\left(x,0\right)=0$ can easily be computed as
\begin{equation}
C_T\left(x,t\right) = D\left(x\right) \int_0^t C_A\left(\tau\right)e^{-\frac{D\left(x\right)}{\lambda}\left(t-\tau\right)}d\tau
\end{equation}

Under the assumption that we have prior knowledge about $\frac{D\left(x\right)}{\lambda}$, we can provide a set of possible values $\widetilde{c_k}$, $k=1,\ldots,K$ for some fixed
number $K\in\mathbb{N}$ for this factor and therefore, obtain a set of possible concentration curves. This set might not contain the true concentration curve in every single
compartment, but still it makes sense to conclude that every concentration curve can be written as a linear combination of at most a few of these basis functions. This leads to the
common approach
\begin{equation}
f\left(x,t\right) = \sum_{k=1}^K u_k\left(x\right) \underbrace{\int_0^t C_A\left(\tau\right)e^{-\widetilde{c_k}\left(t-\tau\right)}d\tau}_{=c_k\left(t\right)}
\end{equation}
where for every $x$, only a few coefficients $u_k\left(x\right)$ are nonzero, which leads to a sparsity term in our model presented within this section.

The idea of kinetic modelling (see \cite{Wernick2004}) is usually applied for dynamic PET reconstruction as in \cite{Benning}, but also for SPECT imaging
\cite{Huesmann1998}, \cite{Maeght}. Another comparable approach uses splines as temporal basis functions \cite{Asma2006} or \cite{ding2015dynamic}. In \cite{Reader2006}, the authors provide an algorithm which
alternatingly estimates the coefficients and updates the temporal basis, where only the number of basis function must be specified, whereas \cite{Zan2013} uses a pre-segmentation of
the image region and B-spline basis functions for the temporal tracer concentration curves. In \cite{ding2015dynamic}, an alternating updating of low rank matrix factorization model
is proposed and further constraints are introduced to limit the solution space. Another approach already mentioned in the introduction which is also based on low-rank matrix
decomposition is presented in \cite{Chen2015}.

The new approach which is introduced here is, referring to the underlying philosophy, a slightly different one, but will lead to a similar model. We assume that the contemplated
region $\Omega$ of the patient's body can be separated into a certain number of disjoint regions $\Omega_1,\ldots,\Omega_K$, whose boundaries remain static over time. The tracer
distribution in every region now does not spatially change, so that every $\Omega_k$ has its own space-independent concentration curve $c_k\left(t\right)$.

In the mathematical sense, this approach leads to a very similar model to the one emerging from the basis pursuit. Under the assumption that every spatial point $x$ belongs to
exactly one region, $f$ can be written as a sum of the regional concentrations $C_k\left(t\right)$ and spatial labelling functions $u_k\left(x\right)$, i.e.
\begin{eqnarray}
f\left(x,t\right) &= \sum_{k=1}^K u_k\left(x\right)c_k\left(t\right)
= \left(\begin{array}{ccc} u_1\left(x\right) & \ldots & u_K\left(x\right) \end{array}\right)
  \left(\begin{array}{c} c_1\left(t\right) \\ \vdots \\ c_K\left(t\right) \end{array}\right) \nonumber \\
&=: u(x)c(t)
\end{eqnarray}
where
\begin{eqnarray}
u_k\left(x\right) = \left\{ \begin{array}{cl} 1 & \mbox{if } x\in\Omega_k \\ 0 & \mbox{otherwise} \end{array} \right.
\end{eqnarray}
In practice, this model makes sense and is easily transferable to the anatomical reality, e.g. thinking of roughly separating the patient's body into different tissue types, where
each has its own unique chemical texture, which in turn causes a different behaviour of the added radiotracer.

The task arising from this model is to reconstruct the indicator functions $u_k$ and to search for the subregional concentration curves $c_k$. Thereby, a reconstruction of the
radiotracer concentration and a simultaneous segmentation of the region of interest are performed.

The solution is formulated as a minimizer of the general variational framework
\begin{eqnarray}
\label{eq:ContinuousVarModel}
\min_{u\in S, \ c\geq 0} &\ KL\left(\mathcal{R}uc,g\right)+\alpha\sum_{k=1}^K\left| u_k\right|_{BV\left(\Omega\right)}+\beta\sum_{k=1}^K\left\|u_k\right\|_
{L^1\left(\Omega\right)} \nonumber \\
&+\frac{\delta}{2}\sum_{k=1}^K\left\|\frac{\partial}{\partial t}c_k\right\|^2_{L^2\left(\left(0,T\right)\right)}
\end{eqnarray}
where
\begin{eqnarray}
\label{eq:Sdefinition}
S:=\left\{v: \sum_{k=1}^Kv_k\left(x\right)=1 \ \forall x, \ v_k\left(x\right)\in\left[0,1\right] \ \forall x\right\}
\end{eqnarray}
and $KL$ represents the Kullback Leibler divergence
\begin{eqnarray}
KL\left(\mathcal{R}uc,g\right) := \int_{\Sigma} \int_0^T \left(\mathcal{R}uc-g+g\log\left(\frac{g}{\mathcal{R}uc}\right)\right)
\end{eqnarray}
which is the data fidelity of choice in case of Poisson noise-corrupted data. $\mathcal{R}:\Omega\times\left[0,T\right]\rightarrow\Sigma\times\left[0,T\right]$ is the Radon operator
as mentioned in (\ref{eq:RadonOperator}), which transforms an image sequence $f$ into sinogram data $g$. As regularization terms we chose for the indicator functions $u$ the total
variation for each subregion to enforce sharp edges as well as a convex sparsity regularization via the $L^1$-norm. Furthermore, we include a smoothness regularization for the tracer
concentration in each subregion by penalizing the $L^2$-norm of the gradient.


\section{Analysis of the Variational Problem}
\label{sec:analysis}

We want to investigate the variational problem of minimizing the functional

\begin{eqnarray}
 J:BV\left(\Omega\right)^K\times W_0^{1,2}\left(\left(0,T\right)\right)^K\rightarrow\mathbb{R}\cup\left\{+\infty\right\}
\end{eqnarray}
defined via
\begin{eqnarray}
J\left(u,c\right) = &KL\left(\mathcal{R}uc,g\right)+\alpha\sum_{k=1}^K\left| u_k\right|_{BV\left(\Omega\right)}+\beta\sum_{k=1}^K\left\|u_k\right\|_{L^1\left(\Omega\right)} \nonumber \\
&+ \delta/2\sum_{k=1}^K\left\|\frac{\partial c_k}{\partial t}\right\|^2_{L^2\left(\left(0,T\right)\right)}
\end{eqnarray}
where $\alpha,\beta,\delta\geq 0$ are regularization parameters which have to be chosen in a suitable way. Furthermore, we assume $\log(g)\in L^1(\Sigma\times[0,T])$. We start with
the basic question of existence of minimizers and then proceed to stability estimates, which are carried out in terms of Bregman distances as nowadays common for
nonlinear variational regularization (cf. \cite{Burger2004,burger2015bregman}).
%

\subsection{Existence of a Minimizer}

In order to prove existence of a minimizer we employ standard compactness and lower semicontinuity arguments. Indeed we shall look for a minimizer satisfying the additional
constraints
\begin{eqnarray}
\label{eq:constraints}
u \in S, c \geq 0.
\end{eqnarray}

Let us first clarify the topologies to work with. As we shall see below the functional $J$ is indeed coercive in the norms of
$BV\left(\Omega\right)^K\times W_0^{1,2}\left(\left(0,T\right)\right)^K$, and we can thus obtain the Banach-Alaoglu theorem to immediately obtain weak-* respectively weak star
compactness. Note for this sake that $BV(\Omega)$ is the dual of a Banach space (cf. \cite{burger2008}) and $W^{1,2}\left(\left(0,T\right)\right)$ is a Hilbert space.

\begin{lemma}[Compactness of sub-level sets]~ \\
Let $\alpha > 0$, $\beta \geq 0$ and $\delta > 0$.
Then there exists $a > 0$ such that
$$
{\cal S}_a = \left\{\left(u,c\right)\in S \times W_0^{1,2}\left(\left(0,T\right)\right)^K: \ J\left(u,c\right)\leq a\right\}
$$
is not empty and compact in the weak-* topology.
\end{lemma}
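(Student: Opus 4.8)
The plan is to verify the two standard ingredients of the direct method: coercivity of $J$ on $S \times W_0^{1,2}((0,T))^K$ (so that sub-level sets are bounded), and then sequential compactness of bounded sets in the weak-$*$ topology (so that sub-level sets are relatively compact), together with the observation that $\mathcal{S}_a$ is nonempty for suitable $a$. First I would establish nonemptiness: pick any admissible $u^0 \in S$ (e.g. one of the indicator configurations) and $c^0 \equiv $ const $\geq 0$, note that $\partial_t c^0 = 0$ so the $\delta$-term vanishes, that the $BV$ and $L^1$ terms are finite, and that $KL(\mathcal{R}u^0 c^0, g)$ is finite — here I would use the assumption $\log g \in L^1(\Sigma \times [0,T])$ together with boundedness of $\mathcal{R}u^0c^0$ and the elementary bound on the integrand $\mathcal{R}uc - g + g\log(g/\mathcal{R}uc)$. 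Then $a := J(u^0,c^0)$ makes $\mathcal{S}_a$ nonempty.

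Next I would prove coercivity. On $S$ the constraint $\sum_k u_k = 1$ with $u_k \in [0,1]$ forces $\|u_k\|_{L^1(\Omega)} \leq |\Omega|$ and $\|u_k\|_{L^\infty} \leq 1$, so the $L^1$-part of the $BV$ norm is automatically controlled; combined with the term $\alpha \sum_k |u_k|_{BV(\Omega)}$ (with $\alpha > 0$) this bounds $\sum_k \|u_k\|_{BV(\Omega)}$ by a constant depending on $a$, $\alpha$, $|\Omega|$. For the $c$-component, $c_k \in W_0^{1,2}((0,T))$ vanishes at the endpoints, so by the Poincaré inequality on $(0,T)$ the seminorm $\|\partial_t c_k\|_{L^2}$ controls the full $W^{1,2}$ norm; hence $\delta > 0$ and $J(u,c) \leq a$ give a bound on $\sum_k \|c_k\|_{W^{1,2}((0,T))}$. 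Since the $KL$ term is nonnegative (the integrand is pointwise nonnegative by convexity of $-\log$), each individual term being bounded above by $a$ follows, and coercivity in the product norm is established.

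With a bound in hand, I would extract weakly-$*$ convergent subsequences: $BV(\Omega)$ is the dual of a separable Banach space (cited above), so bounded sequences have weakly-$*$ convergent subsequences $u_k^n \rightharpoonup^* u_k$, and $W^{1,2}((0,T))$ is a Hilbert space so bounded sequences have weakly convergent subsequences $c_k^n \rightharpoonup c_k$; moreover by the compact embedding $W^{1,2}((0,T)) \hookrightarrow C([0,T])$ we even get uniform convergence of $c_k^n$, and the closed convex constraints $c \geq 0$, $u \in S$ (the latter closed under weak-$*$ $BV$ convergence, using that $BV$-weak-$*$ limits preserve pointwise a.e. bounds along a further subsequence converging in $L^1$) pass to the limit. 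Finally I would invoke lower semicontinuity of each term: $|\cdot|_{BV}$ and $\|\cdot\|_{L^1}$ are weakly-$*$ lower semicontinuous, $\|\partial_t \cdot\|_{L^2}^2$ is weakly lower semicontinuous, and $KL(\mathcal{R}\,\cdot\,,g)$ is lower semicontinuous under these convergences because $\mathcal{R}$ is linear and continuous and $\mathcal{R}(u^n c^n) \to \mathcal{R}(uc)$ can be handled by combining the strong ($L^1$) convergence of $u^n$ with the uniform convergence of $c^n$, after which Fatou's lemma (or convexity/weak lower semicontinuity of $KL$ in its first argument) closes the argument. Hence $J(u,c) \leq \liminf J(u^n,c^n) \leq a$, so $\mathcal{S}_a$ is weakly-$*$ closed, and being a closed subset of a bounded (hence weakly-$*$ relatively compact) set, it is weakly-$*$ compact.

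The main obstacle I anticipate is the lower semicontinuity of the Kullback–Leibler fidelity under the relevant mixed convergence: one must show $\mathcal{R}(u^n c^n) \to \mathcal{R}(uc)$ in a topology strong enough to pass to the liminf in $KL$, which requires carefully exploiting that the product $u^n c^n$ converges because one factor converges strongly in $L^1(\Omega)$ while the other converges uniformly in $t$ and is uniformly bounded — and then handling the possibility that $\mathcal{R}(uc)$ vanishes on sets where $g > 0$, which is precisely where $KL$ may jump to $+\infty$; the nonnegativity of the integrand plus Fatou's lemma is what makes lower semicontinuity (rather than continuity) hold and suffices for the compactness claim.
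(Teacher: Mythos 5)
Your overall route is the same as the paper's: nonemptiness via an explicit admissible pair, coercivity from the $S$-constraint (which gives $L^\infty$ and hence $L^1$ bounds on $u_k$) together with the $\alpha$-weighted $BV$ seminorm and the Poincar\'e--Friedrichs inequality for the $c_k$, and then Banach--Alaoglu. You are in fact somewhat more complete than the paper's own proof of this lemma, since you also argue weak-$*$ closedness of ${\cal S}_a$ via lower semicontinuity of each term (including the Kullback--Leibler term, handled through strong convergence of the products $u^n_k c^n_k$ plus Fatou), whereas the paper defers closedness to the subsequent lower semicontinuity lemma and the discussion of the constraint set.

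There is, however, one concrete step that fails: your witness for nonemptiness. You take $c^0 \equiv \mathrm{const} \geq 0$, but the admissible class is $W_0^{1,2}\left(\left(0,T\right)\right)^K$, and the zero-trace condition is exactly what you yourself invoke two sentences later to apply the Poincar\'e inequality; a nonzero constant is therefore not an element of the space. The only constant that is admissible is $c^0 \equiv 0$, but then $\mathcal{R}u^0c^0 \equiv 0$ and the integrand $\mathcal{R}uc - g + g\log\left(g/\mathcal{R}uc\right)$ equals $+\infty$ wherever $g>0$, so $KL\left(\mathcal{R}u^0c^0,g\right)=+\infty$ and $J\left(u^0,c^0\right)$ is not finite. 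The repair is trivial and is what the paper does: take a nonconstant admissible curve such as $\tilde c_k\left(t\right)=t$ (together with $\tilde u_k \equiv 1/K$), for which $\mathcal{R}\tilde u\tilde c$ is positive where needed and all terms of $J$ are finite; with that substitution your argument goes through.
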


\begin{proof}
First of all $\tilde c_k(t) \equiv t$ and $\tilde u_k(x) \equiv \frac{1}K$ are admissible elements that lead to a finite value of $J$, hence choosing $a = J(\tilde u,\tilde c)$ yields
a nonempty ${\cal S}_a$. Now let $(u,c) \in {\cal S}_a$, then the nonnegativity of each term yields
$$
\left|u_k\right|_{BV\left(\Omega\right)} \leq \frac{a}{\alpha}, \qquad k=1,\ldots,K
$$
and the fact that $u \in S$ has bounded components in $L^\infty(\Omega)$ yields a bound for the norm of $u$ in $BV(\Omega)^K$.

For $c$, we find by the same arguments
$$
\left\|\frac{\partial}{\partial t}c_k\right\|_{L^2\left(\left(0,T\right)\right)} \leq \sqrt{\frac{2a}{\delta}}, \qquad k=1,\ldots K,
$$
and the left-hand side is an equivalent norm on $W^{1,2}_0\left(\left(0,T\right)\right)$ due to the Poincar\'{e}-Friedrichs inequality. Finally the Banach-Alaoglu theorem yields the
assertion.

\end{proof}

Secondly, we show the lower semi-continuity of the functional $J$: \\

\begin{lemma}[Lower semi-continuity]~ \\
Let $\alpha, \beta, \delta \geq 0$, $u\in BV\left(\Omega\right)$ and $c\in W^{1,2}\left(\left(0,T\right)\right)$. Then the functional $J$ is lower semi-continuous
on the constraint set with respect to the weak-* topology in $BV\left(\Omega\right)^K$ and weak topology in $W^{1,2}\left(\left(0,T\right)\right)$.
\end{lemma}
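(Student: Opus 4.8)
The plan is to decompose $J$ into its four summands and argue for each one separately, since lower semi-continuity is preserved under sums of functionals bounded below. Three of the four terms are covered by classical results. The total variation seminorm $u_k\mapsto|u_k|_{BV(\Omega)}$ is weak-$*$ lower semi-continuous on $BV(\Omega)$. The term $c_k\mapsto\frac{\delta}{2}\|\partial_t c_k\|_{L^2((0,T))}^2$ is the square of a Hilbert-space seminorm on $W^{1,2}((0,T))$ and hence weakly lower semi-continuous. The term $u_k\mapsto\beta\|u_k\|_{L^1(\Omega)}$ is in fact weak-$*$ continuous: the embedding $BV(\Omega)\hookrightarrow L^1(\Omega)$ is compact on the bounded domain $\Omega$, so weak-$*$ convergence in $BV$ forces strong convergence in $L^1$; on the constraint set $u\in S$ this term is even constant, equal to $\beta|\Omega|$. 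One should also note that the constraint set $S\times\{c\ge0\}$ is sequentially closed for these topologies, which is immediate: the pointwise bounds $u_k(x)\in[0,1]$ and the identity $\sum_k u_k(x)=1$ pass to the limit under $L^1$-convergence, and $c\ge0$ passes to the limit under the uniform convergence established next.

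The substantial step is lower semi-continuity of the Kullback--Leibler term $(u,c)\mapsto KL(\mathcal{R}uc,g)$; the difficulty is genuinely the bilinear coupling $uc$ sitting inside the nonlinear forward operator, combined with the nonquadratic fidelity. Let $(u^{(n)},c^{(n)})\to(u,c)$ in the stated sense. Weak-$*$ convergence in $BV(\Omega)$ together with the compact embedding $BV(\Omega)\hookrightarrow L^1(\Omega)$ yields $u^{(n)}\to u$ strongly in $L^1(\Omega)^K$, and the uniform bound $0\le u_k^{(n)}\le1$ from $u^{(n)}\in S$ keeps $\|u^{(n)}\|_{L^1}$ bounded. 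Since $(0,T)$ is one-dimensional, the embedding $W^{1,2}((0,T))\hookrightarrow C([0,T])$ is compact, so $c^{(n)}\to c$ uniformly on $[0,T]$, in particular in $L^\infty((0,T))^K$. Writing $u^{(n)}c^{(n)}-uc=u^{(n)}(c^{(n)}-c)+(u^{(n)}-u)c$ and combining the two convergences gives
\[
\|u^{(n)}c^{(n)}-uc\|_{L^1(\Omega\times(0,T))}\le C\bigl(\|u^{(n)}\|_{L^1(\Omega)^K}\|c^{(n)}-c\|_{L^\infty}+\|u^{(n)}-u\|_{L^1(\Omega)^K}\|c\|_{L^\infty}\bigr)\longrightarrow0,
\]
so $u^{(n)}c^{(n)}\to uc$ strongly in $L^1(\Omega\times(0,T))$. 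Because the attenuated Radon transform $\mathcal{R}$ is a bounded linear operator on $L^1$ — the attenuation weight $e^{-\int_{H(x)}\mu}$ is bounded by $1$, so $\mathcal{R}$ does not increase $L^1$-mass — it follows that $\mathcal{R}u^{(n)}c^{(n)}\to\mathcal{R}uc$ strongly in $L^1(\Sigma\times(0,T))$.

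To conclude, pass to a subsequence realising $\liminf_n KL(\mathcal{R}u^{(n)}c^{(n)},g)$ and then to a further subsequence along which $\mathcal{R}u^{(n)}c^{(n)}\to\mathcal{R}uc$ pointwise a.e. For each fixed value $g(y)\ge0$, the integrand $z\mapsto z-g(y)+g(y)\log(g(y)/z)$ is nonnegative — it is convex in $z$, vanishes at $z=g(y)$, and tends to $+\infty$ as $z\to0^+$ — and continuous as a map $[0,\infty]\to[0,\infty]$, so the integrands converge a.e., and Fatou's lemma gives $KL(\mathcal{R}uc,g)\le\liminf_n KL(\mathcal{R}u^{(n)}c^{(n)},g)$. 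Adding the four resulting lower bounds (using superadditivity of $\liminf$ for nonnegative terms) yields $J(u,c)\le\liminf_n J(u^{(n)},c^{(n)})$. An alternative to the subsequence argument is to invoke lower semi-continuity of convex integral functionals (Ioffe's theorem), since the integrand is convex in its first argument; equivalently, $KL(\cdot,g)$ is convex and strongly $L^1$-lower semi-continuous, hence weakly $L^1$-lower semi-continuous.

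The main obstacle is precisely the product $uc$: weak-$*$ convergence of $u^{(n)}$ and weak convergence of $c^{(n)}$ do not by themselves control $u^{(n)}c^{(n)}$, so the argument must upgrade both to strong convergence, which is why the constraint $u\in S$ (supplying the $L^\infty$-bound on $u$) and the choice of $W^{1,2}((0,T))$ for the time variable (supplying compactness into $C([0,T])$) are both used in an essential way. A smaller technical point to keep in mind is the $L^1$-continuity of $\mathcal{R}$, which relies on the attenuation map being bounded.
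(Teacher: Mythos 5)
Your proof is correct and follows essentially the same route as the paper: termwise lower semi-continuity of the four summands, with the key step being the upgrade of the weak-$*$ and weak convergences to strong $L^1(\Omega\times(0,T))$ convergence of the bilinear product $u^{(n)}c^{(n)}$ via the compact embeddings of $BV(\Omega)$ and $W^{1,2}((0,T))$ into $L^1$. The only cosmetic difference is the final step for the Kullback--Leibler term, where the paper invokes weak sequential lower semi-continuity of $KL(\cdot,g)$ in $L^1$ (Resmerita--Anderssen, Lemma 3.4) while you give a self-contained subsequence-plus-Fatou argument; both are valid given the strong convergence you established.
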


\begin{proof}
We can verify the lower semicontinuity of all terms separately due to the additive structure of $J$. Since the norm in $BV(\Omega)$ is the convex conjugate of the characteristic
function of the unit ball in its predual space, it is lower semicontinuous in the weak-star topology (cf. also \cite{burger2008} for an explicit proof). Due to the compact
embedding of $BV(\Omega)$ into $L^1(\Omega)$, the $L^1$-norms are even continuous. The squared equivalent norms in $W_0^{1,2}((0,T))$ are lower semicontinuous due to weak
semicontinuity of norms in reflexive Banach spaces (\cite{Ekeland1987}). It hence remains to verify the lower semicontinuity of the data fidelity term. For this sake we employ the
weak sequential lower semicontinuity of the Kullback-Leibler divergence in $L^1(\Omega \times (0,T))$ (cf. \cite{Resmerita2007}, Lemma 3.4). Consequently it remains to show that for nonnegative
sequences $u_k^j \rightharpoonup^* u_k$ in $BV(\Omega)$ and $c_k^j \rightharpoonup c_k$ in $W_0^{1,2}((0,T))$ we obtain weak $L^1$-convergence of $f^j= \sum_{k=1}^K u_k^j c_k^j$.
However, from the compact embeddings into $L^1(\Omega)$ and $L^1((0,T))$ we obtain an even stronger property, namely strong $L^1$-convergence of $f^j$ due to
\begin{eqnarray*}
\fl &\int_{\Omega}\int_0^T |f^j(x,t) - f(x,t)|~dt~dx \\
\fl = &\int_{\Omega}\int_0^T \left|\sum_{k=1}^K( u_k^j\left(x\right)c_k^j\left(t\right)-u_k^j\left(x\right)c_k\left(t\right)+u_k^j\left(x\right)c_k\left(t\right)-u_k\left(x\right)c_k\left(t\right))\right| dtdx \\
\fl \leq &\sum_{k=1}^K \int_{\Omega}\int_0^T \left|u_k^j\left(x\right)c_k^j\left(t\right)-u_k^j\left(x\right)c_k\left(t\right)\right|+\left|u_k^j\left(x\right)c_k\left(t\right)-u_k\left(x\right)c_k\left(t\right)\right| dtdx \\
\fl \leq &\sum_{k=1}^K \int_{\Omega} \left|u_k^j\left(x\right)\right| dx \underbrace{\int_0^T \left|c_k^j\left(t\right)-c_k\left(t\right)\right| dt}_{\rightarrow \ 0}  +
 \sum_{k=1}^K \int_0^T \left|c_k\left(t\right)\right|dt \underbrace{\int_{\Omega} \left|u_k^j\left(x\right)-u_k\left(x\right)\right| dx}_{\rightarrow \ 0} .
\end{eqnarray*}

\end{proof}

Finally we need to prove the closedness of the constraint set to finish the proof of existence. Note that we have a compact embedding of $BV(\Omega)$ into $L^1(\Omega)$ and of
$W_0^{1,2}((0,T))$ into $L^2((0,T))$. Those are sufficient to pass to the limit in lower and upper bounds on $u_k$ respectively $c_k$. Moreover, interpreting $1$ as an $L^1$ function,
by the equation
$$
\sum_{k=1}^K u_k = 1.
$$
$\sum_{k=1}^K u_k$ is continuous in $L^1$ and hence the constraint set is closed. Summing up, we obtain the following existence result for $J$:

\begin{theorem}
Let $\alpha >0$, $\beta \geq 0$, $\delta > 0$. Then there exists a minimizer of $J$ in $S \times W^{1,2}_0((0,T))$ with all $c_k$ nonnegative almost everywhere.
\end{theorem}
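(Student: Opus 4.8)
The plan is to assemble the three lemmas via the direct method of the calculus of variations. First I would observe that $J$ is bounded below (it is a sum of the Kullback--Leibler divergence, which is nonnegative, and manifestly nonnegative regularization terms), so the infimum $m := \inf_{(u,c) \in S \times W_0^{1,2}((0,T))^K, \ c \geq 0} J(u,c)$ is a finite real number; finiteness of $m$ (not $+\infty$) follows from the admissible pair $\tilde u_k \equiv 1/K$, $\tilde c_k(t) \equiv t$ exhibited in the proof of Lemma~1.1, which gives $m \leq J(\tilde u, \tilde c) < \infty$. I would then pick a minimizing sequence $(u^j, c^j)$ in the constraint set with $J(u^j, c^j) \to m$, and note that without loss of generality $J(u^j, c^j) \leq m+1 =: a$ for all $j$, so the whole sequence lies in the sub-level set $\mathcal{S}_a$ of Lemma~1.1.

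Next I would invoke Lemma~1.1 (Compactness of sub-level sets): since $\alpha > 0$ and $\delta > 0$, the set $\mathcal{S}_a$ is nonempty and weak-$*$ compact in $BV(\Omega)^K \times W_0^{1,2}((0,T))^K$. Hence there is a subsequence, still denoted $(u^j, c^j)$, converging weakly-$*$ to some $(\bar u, \bar c) \in BV(\Omega)^K$ (with weak convergence $c^j \rightharpoonup \bar c$ in the Hilbert space $W_0^{1,2}((0,T))^K$). I would then use the closedness discussion preceding the theorem: the compact embeddings $BV(\Omega) \hookrightarrow\hookrightarrow L^1(\Omega)$ and $W_0^{1,2}((0,T)) \hookrightarrow\hookrightarrow L^2((0,T))$ give $u^j_k \to \bar u_k$ strongly in $L^1(\Omega)$ and $c^j_k \to \bar c_k$ strongly in $L^2((0,T))$, so one can pass to the limit in the pointwise-a.e. (along a further subsequence) inequalities $0 \leq u^j_k \leq 1$, $\sum_k u^j_k = 1$, and $c^j_k \geq 0$, yielding $\bar u \in S$ and $\bar c_k \geq 0$ almost everywhere. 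Thus $(\bar u, \bar c)$ is admissible.

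Finally I would apply Lemma~1.2 (Lower semi-continuity): along this subsequence, $J(\bar u, \bar c) \leq \liminf_{j \to \infty} J(u^j, c^j) = m$. Combined with $J(\bar u, \bar c) \geq m$ by definition of the infimum over the admissible set, this forces $J(\bar u, \bar c) = m$, so $(\bar u, \bar c)$ is the sought minimizer, and it satisfies $c_k \geq 0$ a.e. as established above. I do not expect a serious obstacle here, since the three lemmas do essentially all the work; the one point requiring a little care is the interplay of convergences needed to pass constraints to the limit — in particular extracting a common subsequence along which $u^j_k \to \bar u_k$ and $c^j_k \to \bar c_k$ a.e., so that the pointwise constraints $0 \le u_k^j \le 1$, $\sum_k u_k^j = 1$ and $c_k^j \ge 0$ survive — but this is routine given the compact embeddings already invoked in the excerpt. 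A minor subtlety worth a sentence is checking that the weak-$*$ limit of $\sum_k u_k^j$ (which equals $1$ for every $j$) is indeed $1$, which is immediate from the $L^1$ strong convergence.
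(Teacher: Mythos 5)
Your proposal is correct and follows essentially the same route as the paper: the theorem there is obtained by exactly this direct-method assembly of the compactness lemma, the closedness discussion of the constraint set (via the compact embeddings into $L^1(\Omega)$ and $L^2((0,T))$ and a.e.\ convergence of a further subsequence), and the lower semi-continuity lemma. The only points you add explicitly --- boundedness below of $J$, the choice $a=m+1$, and passing the pointwise constraints to the limit --- are exactly the routine details the paper leaves implicit, and they are handled correctly.
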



\subsection{Error estimates}

In the following we briefly want to outline a popular application of the Bregman distance as stated in \cite{Burger2004} in order to derive error estimates for minimizers of
regularized variational models in the context of inverse problems. Although this result can state the convergence under certain conditions, it is based on a nonlinearity condition
(\ref{eq:NonlinearityCondition}), which we cannot verify for our particular case. Since other convergence theorems which could be used here also suffer from slightly different
conditions on the data (see for example \cite{Werner2015},\cite{Werner2012}), we nevertheless state the results here. For more details, see \cite{Rossmanith2014}.

Let $\mathcal{X}$ and $\mathcal{Y}$ be some Banach spaces. To keep the notation simple we will use the notation
$v=(u,c)\in\mathcal{X}$ throughout this part. Let $\widetilde{v}\in\mathcal{X}$ be the exact solution
of the inverse problem $G(v)=g$ with a continuous and  Fr\'{e}chet-differentiable nonlinear operator $G:\mathcal{X}\rightarrow\mathcal{Y}$. In practice we face the problem that we only have access to noisy data
$g^{\delta}$. We are interested in an approximate solution of $G\left(v\right)=g^{\delta}$ which is close to the exact solution $\widetilde{v}$. Therefore we
apply a regularized variational technique as above, which in short-hand notation is written as
\begin{eqnarray}
\label{eq:GeneralVarModel}
\min_v \ F_{g^{\delta}}\left(G\left(v\right)\right)+\alpha R\left(v\right)
\end{eqnarray}
with a data fidelity $F_{g^{\delta}}:\mathcal{Y}\rightarrow\mathbb{R}\cup\left\{+\infty\right\}$ and convex regularization term $R:\mathcal{X}\rightarrow\mathbb{R}\cup\left\{+\infty\right\}$
with parameter $\alpha\geq0$. We assume in the abstract setting that $F_{g^{\delta}}\circ G$ is Fr\'{e}chet-differentiable, which is indeed true in our setting. In order to estimate
errors between a minimizer $\hat{v}$ of (\ref{eq:GeneralVarModel}) and $\tilde{v}$ we shall use Bregman distances and, as well-known in regularization theory, need some kind of
source condition.

As in \cite{Resmerita2006} we use the source condition
\begin{eqnarray}
\label{eq:SourceCondition}
\exists \ \rho\in\partial R\left(\widetilde{v}\right), \ \exists \ q\in\mathcal{Y}^*\setminus\left\{0\right\}: \ \rho=G'\left(\widetilde{v}\right)^*q
\end{eqnarray}

Here, $\partial R$ denotes the subgradient of $R$, $G'$ the Fr\'{e}chet-derivative of the operator $G$ and $(G')^*$ its convex conjugate. Combining ideas from
\cite{benning2011error} for non-quadratic fidelities and \cite{Resmerita2006} for nonlinear forward operators we obtain the following result: \\

\begin{theorem}[Error estimate in the Bregman distance]~ \\
\label{th:ErrorEstimateBregman}
Let $\widetilde{v}$ be the exact solution of the inverse problem $G\left(v\right)=g$ and let the source condition (\ref{eq:SourceCondition}) be satisfied. Let
$R:\mathcal{X}\rightarrow\mathbb{R}\cup\left\{+\infty\right\}$ be convex. Furthermore, let the nonlinearity condition
\begin{eqnarray}
\label{eq:NonlinearityCondition}
\langle q,G\left(v\right)-G\left(w\right)-G'\left(w\right)\left(v-w\right)\rangle \leq C\left\|q\right\|_{\mathcal{Y}^*}\left\|G\left(v\right)-G\left(w\right)\right\|_{\mathcal{Y}}
\end{eqnarray}
hold for $q$ from equation (\ref{eq:SourceCondition}). If there exists a minimizer $\hat{v}$ of the variational model (\ref{eq:GeneralVarModel}) for $\alpha>0$ which satisfies the KKT optimality conditions, then
$$
F_{g^{\delta}}\left(G\left(\hat{v}\right)\right) + \alpha D_R^{\rho}\left(\hat{v},\widetilde{v}\right) \leq F_{g^{\delta}}\left(g\right)+\alpha\left(C+1\right)
\left\|q\right\|_{\mathcal{Y}^*}\left\|G\left(\hat{v}\right)-G\left(\widetilde{v}\right)\right\|_{\mathcal{Y}}
$$

\end{theorem}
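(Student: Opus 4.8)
The plan is to carry out the by now standard Bregman-distance estimate for nonlinear variational regularization, combining the device of \cite{Resmerita2006} for the nonlinear operator with that of \cite{benning2011error} for the non-quadratic fidelity. First I would use the optimality of $\hat v$: since $\widetilde v$ belongs to the constraint set of (\ref{eq:GeneralVarModel}) and satisfies $G(\widetilde v)=g$, either the minimality of $\hat v$ directly or the KKT optimality conditions it is assumed to satisfy yield
\[
F_{g^\delta}\!\left(G(\hat v)\right)+\alpha R(\hat v)\;\le\;F_{g^\delta}\!\left(G(\widetilde v)\right)+\alpha R(\widetilde v)\;=\;F_{g^\delta}(g)+\alpha R(\widetilde v).
\]
Next, with the subgradient $\rho\in\partial R(\widetilde v)$ from the source condition (\ref{eq:SourceCondition}) and the definition of the Bregman distance, $R(\hat v)-R(\widetilde v)=D_R^\rho(\hat v,\widetilde v)+\langle\rho,\hat v-\widetilde v\rangle$, this rearranges to
\[
F_{g^\delta}\!\left(G(\hat v)\right)+\alpha D_R^\rho(\hat v,\widetilde v)\;\le\;F_{g^\delta}(g)-\alpha\langle\rho,\hat v-\widetilde v\rangle .
\]

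The core step is then to bound $-\alpha\langle\rho,\hat v-\widetilde v\rangle$ by $\alpha(C+1)\|q\|_{\mathcal Y^*}\|G(\hat v)-G(\widetilde v)\|_{\mathcal Y}$. Using $\rho=G'(\widetilde v)^*q$ I would move the pairing into $\mathcal Y$, $\langle\rho,\hat v-\widetilde v\rangle=\langle q,G'(\widetilde v)(\hat v-\widetilde v)\rangle$, and split the linearized image through the trivial identity
\[
G'(\widetilde v)(\hat v-\widetilde v)=\bigl(G(\hat v)-G(\widetilde v)\bigr)-\bigl(G(\hat v)-G(\widetilde v)-G'(\widetilde v)(\hat v-\widetilde v)\bigr).
\]
The first bracket is handled by duality, $-\langle q,G(\hat v)-G(\widetilde v)\rangle\le\|q\|_{\mathcal Y^*}\|G(\hat v)-G(\widetilde v)\|_{\mathcal Y}$; the second bracket is precisely the quantity estimated by the nonlinearity condition (\ref{eq:NonlinearityCondition}) with $v=\hat v$, $w=\widetilde v$, namely by $C\|q\|_{\mathcal Y^*}\|G(\hat v)-G(\widetilde v)\|_{\mathcal Y}$. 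Summing the two bounds, multiplying by $\alpha>0$ and substituting into the inequality from the first step produces exactly the claimed estimate.

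I do not expect any genuine obstacle inside the estimate itself — it is essentially bookkeeping. The points that need care are rather structural: one must ensure that all dual pairings are well defined, which is where the assumed continuity and Fr\'echet-differentiability of $G$, the Fr\'echet-differentiability of $F_{g^\delta}\circ G$, and the membership $q\in\mathcal Y^*\setminus\{0\}$ with $\rho=G'(\widetilde v)^*q$ are used, and one must have $\widetilde v$ in the constraint set so that it is an admissible competitor — the KKT hypothesis on $\hat v$ being what makes the comparison legitimate despite the constraints $u\in S$, $c\ge 0$. The real limitation, as the authors already note, is that the nonlinearity condition (\ref{eq:NonlinearityCondition}) cannot be verified for the attenuated Radon transform composed with the bilinear map $(u,c)\mapsto uc$; so the theorem is stated abstractly, and the bound turns into a convergence rate only once $\|G(\hat v)-G(\widetilde v)\|_{\mathcal Y}$ is linked to the noise level through a discrepancy-type estimate on the fidelity term.
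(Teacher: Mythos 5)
Your proposal is correct and follows essentially the same route as the paper's proof: the comparison of functional values at $\hat v$ and $\widetilde v$, the rearrangement into the Bregman distance, the insertion of the source condition $\rho=G'(\widetilde v)^*q$, and the splitting of $-\langle q,G'(\widetilde v)(\hat v-\widetilde v)\rangle$ into the Taylor remainder (handled by the nonlinearity condition) plus $-\langle q,G(\hat v)-G(\widetilde v)\rangle$ (handled by duality) are exactly the steps in the paper. Your closing remarks on admissibility of $\widetilde v$ and on the unverifiability of the nonlinearity condition for the concrete operator also match the authors' own caveats.
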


\begin{proof}
From the definition of $\hat{v}$ we obtain that $\hat{v}\in\arg\min_u \ F_{g^{\delta}}\left(G\left(v\right)\right)+\alpha R\left(v\right)$, hence it follows that
$$
F_{g^{\delta}}\left(G\left(\hat{v}\right)\right) + \alpha R\left(\hat{v}\right) \leq F_{g^{\delta}}\left(G\left(\widetilde{v}\right)\right) + \alpha R\left(\widetilde{v}\right)
$$
and by adding $-\alpha\langle \rho,\hat{v}-\widetilde{v}\rangle_{\mathcal{X}}-\alpha R(\widetilde{v})$ to both sides and $G\left(\widetilde{v}\right)=g$ we obtain
$$
F_{g^{\delta}}\left(G\left(\hat{v}\right)\right) + \alpha\underbrace{\left(R\left(\hat{v}\right)-R\left(\widetilde{v}\right)-\langle\rho,\hat{v}-\widetilde{v}\rangle_
{\mathcal{X}}\right)}_{=D_R^{\rho}\left(\hat{v},\widetilde{v}\right)} \leq F_{g^{\delta}}\left(g\right)-\alpha\langle\rho,\hat{v}-\widetilde{v}\rangle_{\mathcal{X}}
$$
By inserting the source condition $\rho=G'\left(\widetilde{v}\right)^*q$ it follows that
\begin{eqnarray*}
&F_{g^{\delta}}\left(G\left(\hat{v}\right)\right)+\alpha D_R^{\rho}\left(\hat{v},\widetilde{v}\right) \\
\leq &F_{g^{\delta}}\left(g\right)-\alpha\langle G'\left(\widetilde{u}\right)^*q,\hat{v}-\widetilde{v}\rangle_{\mathcal{X}} \\
= &F_{g^{\delta}}\left(g\right)+\alpha\langle q,-G'\left(\widetilde{u}\right)\left(\hat{v}-\widetilde{v}\right)\rangle_{\mathcal{Y}} \\
= &F_{g^{\delta}}\left(g\right)+\alpha \langle q,G\left(\hat{v}\right)-G\left(\widetilde{v}\right)-G'\left(\widetilde{v}\right)\left(\hat{v}-\widetilde{v}\right)\rangle_
{\mathcal{Y}}
 -\alpha\langle q,G\left(\hat{v}\right)-G\left(\widetilde{v}\right)\rangle_{\mathcal{Y}} \\
\leq &F_{g^{\delta}}\left(g\right) + \alpha\left(C+1\right)\left\|q\right\|_{\mathcal{Y}^*}\left\|G\left(\hat{v}\right)-G\left(\widetilde{v}\right)\right\|_{\mathcal{Y}}
\end{eqnarray*}
\end{proof}

\section{Algorithm}
\label{sec:alg}

In order to find a minimizer of the proposed variational model (\ref{eq:ContinuousVarModel}), we apply an alternating updating structure based on a forward-backward splitting EM-type
method with a variable damping parameter \cite{Sawatzky2008}. Let $n=n_1\cdot n_2$ be the total number of image pixels, $M$ the number of time steps and $K$ the number of image regions
as before. Then we can introduce a $n\times K$-matrix $U$ and a $M\times K$-matrix $C$, which correspond to the previously defined $u_1,\ldots,u_K$,
respectively $c_1,\ldots,c_K$. Each row of $U$ corresponds to one image pixel, where the entry in column $j$ states the rate of affiliation of this image pixel to the $j$-th region. Note
that due to convexity reasons, we do not force $U_{ij}\in\{0,1\}$, but allow $U_{ij}\in[0,1]$ with the restriction $\sum_{j=1}^KU_{ij}=1$ for every row $i$. The $k$-th column of the
matrix $C$ represents the tracer concentration over time within region $k$. It follows that $f=UC^T$ is a $n\times M$-matrix, where $f_{ij}$ is the tracer concentration in pixel $i$
at time $j$. Setting $m$ as the total number of detector bins (accumulated over all measurement angles), the data can be represented by a $m\times M$-matrix, where every row contains
the measurements in one detector bin over time. The Radon transform operator can be written as a $m\times n$-matrix, which can be applied to every column of $f$ to obtain the Radon
transform of each frame (cp. \cite{Rossmanith2014}). In this discrete setting, the convex set $S$ reads
\begin{equation}
S = \{ U\in\mathbb{R}^{n\times K} \ | \ \sum_{j=1}^KU_{ij}=1 \ \forall \ i, \ U_{ij}\in[0,1] \ \forall \ i,j \}.
\end{equation}

We use an unconstrained discretized model version of (\ref{eq:ContinuousVarModel})
\begin{eqnarray}
\label{eq:DiscreteVarModel}
\fl \min_{U,C^T} KL\left(\mathcal{R}UC^T,g\right)+\alpha\sum_{k=1}^K\left\|\nabla U_k\right\|_1+\beta\left\|U\right\|_1+\delta_S\left(U\right)+\frac{\delta}{2}
\sum_{k=1}^K\left\|\nabla_tC_k\right\|^2_2+\delta_+\left(C^T\right)
\end{eqnarray}
where $\delta_S$ (or likewise $\delta_+$) is given by
\begin{eqnarray}
\delta_S\left(U\right) = \left\{ \begin{array}{cl} 0 & \mbox{if } U\in S \\ \infty & \mbox{otherwise} \end{array} \right.
\end{eqnarray}

In order to compute the derivative of $\mathcal{R}UC^T$ with respect to $U$ and $C$ easily, we define linear operators $A$ and $B$ such that $A vec(U) = vec(\mathcal{R}UC^T)$ and
$B vec(C^T) = vec(\mathcal{R}UC^T)$, where $vec(\cdot)$ denotes the column-by-column vectorization of a matrix. The detailed construction of $A$ and $B$ as well as the discretization
of the operator $\mathcal{R}$ can be found in \cite{Rossmanith2014}. In the following algorithm, we use the vectorized versions of all matrices, although we still use the same notation as
before instead of writing $vec(\cdot)$ for the sake of simplicity.  Let $k$ be the outer iteration index, $A_k$ and $B_k$ be the operators such that $\ A_kvec(U):=vec(\mathcal{R}UC_k^T)$ and  $ B_{k+1}vec(C^T): =vec(\mathcal{R}U_{k+1}C^T)$. Let $U_{,\  0}:=U_k$ and $C_{,\  0}^T:=C_k^T$,  thus the subproblems for $U$ and $C^T$ are solved by the following iterations:
\begin{subequations}\label{eq:U}
\begin{numcases}{}
\widetilde{U}_{,\  i+\frac{1}{2}} = w_i (\frac{U_{,\  i}}{A_k^T1} A_k^T\left(\frac{g}{A_kU_{,\  i}}\right)) + \left(1-w_i\right) U_{,\  i} \label{eq:EMu}\\
U_{,\  i+1} = \arg\min_U \ \frac{1}{2} \left\|\frac{\sqrt{A_k^T1}\left(U-\widetilde{U}_{,\  i+\frac{1}{2}}\right)}{\sqrt{U_{,\  i}}}\right\|_2^2 + w_i\alpha\sum_{l=1}^K
\left\|\nabla_x U(:,\ l)\right\|_1 \nonumber \\
\quad\quad+ w_i\beta\sum_{l=1}^K \left\|U(:,l)\right\|_1 + w_i\delta_S\left(U\right)
\label{eq:SubproblemU}
\end{numcases}
\end{subequations}
and for $C^T$ respectively
\begin{subequations}\label{eq:C}
\begin{numcases}{}
\widetilde{C}^T_{,\ i+\frac{1}{2}} = w_i (\frac{C^T_{,\  i}}{B_{k+1}^T1} B_{k+1}^T\left(\frac{g}{B_{k+1}C^T_{,\  i}}\right)) + \left(1-w_i\right) C^T_{,\ i} \\
C^T_{,\  i+1} = \arg\min_{C^ T} \ \frac{1}{2} \left\|\frac{\sqrt{B_{k+1}^T1}\left(C^T-\widetilde{C}^T_{,\  i+\frac{1}{2}}\right)}{\sqrt{C^T_{,\ i}}}\right\|_2^2 +
w_i\frac{\delta}{2}\sum_{l=1}^K \left\|\nabla_tC(:,l)\right\|^2_2 \nonumber \\
\quad\quad+ w_i\delta_+\left(C^T\right)
\label{eq:SubproblemC}
\end{numcases}
\end{subequations}
with a damping parameter $w_i\in(0,1]$ (cp. \cite{Sawatzky2008}, \cite{burger2008}). $U(:, l)$ denotes the $l$-th column of the orginial matrix $U$, hence, in the vectorized version the entries
$(l-1)n+1, \ldots, ln$. We note that the divisions in the two steps are all performed componentwisely. In the first step of both updating procedures, a standard EM step is performed, while in the second step we have to solve a slightly simpler minimization
problem consisting of a weighted Gaussian data term plus the original regularization terms, which are now also scaled by the damping parameter $w_k$. The subproblems
(\ref{eq:SubproblemU}) and (\ref{eq:SubproblemC}) are solved via a modified primal dual hybrid gradient algorithm (\cite{Chambolle2010}\cite{esser2010general}), which in turn leads
to a set of simple minimization problems that can be solved by direct updates. The method proposes an updating scheme for functionals of the general form
\begin{eqnarray}
\arg\min_{x\in X} \ F\left(Kx\right)+G\left(x\right)
\end{eqnarray}
with finite dimensional real vector spaces $X$ and $Y$, a continuous linear operator $K:X\rightarrow Y$ and $F:Y\rightarrow [0,\infty)$, $G:X\rightarrow [0,\infty)$ convex and lower
semicontinuous. For subproblem (\ref{eq:SubproblemU}) we define
\begin{eqnarray}
F_1\left(\nabla_xU^{(1)},U^{(2)},U^{(3)}\right) &:= \omega_k\alpha\sum_l\left\|\nabla_x U^{(1)} (:, l)\right\|_1+\omega_k\beta\left\|U^{(2)}\right\|_1 \nonumber \\
                                                & \ +\omega_k\delta_S\left(U^{(3)}\right) \\
G_1\left(U\right)  &:= \frac{1}{2}\int_{\Omega} \frac{A_k^T1\left(U-\widetilde{U}_{,\ i+\frac{1}{2}}\right)^2}{U_{,\  i}} \\
K_1U 				       &:= \left(\nabla_x U,U,U\right)^T
\end{eqnarray}
and, in the same way, for subproblem (\ref{eq:SubproblemC})
\begin{eqnarray}
F_2\left(\nabla_tC^{(1)T},C^{(2)T}\right) &:= \omega_i\frac{\delta}{2}\sum_l\left\|\nabla_tC^{(1)}(:,l)\right\|^2_2+\omega_i\delta_+\left(C^{(2)T}\right) \\
G_2\left(C^T\right)  &:= \frac{1}{2}\int_{\Omega} \frac{B_{k+1}^T1\left(C^T-\widetilde{C}^T_{,\ i+\frac{1}{2}}\right)^2}{C^T_{,\  i}} \\
K_2C^T				      &:= \left(\nabla_t C^T,C^T\right)^T
\end{eqnarray}

Let us have a closer look at subproblem (\ref{eq:SubproblemU}). For $U$, as proposed in \cite{Chambolle2010}, for a parameter $\theta>0$ and steps $\sigma,\tau>0$ we obtain the
three-step method

\begin{subequations}
\begin{numcases}{}
y_{i+1} = \mbox{prox}_{\sigma F_1^*}\left( y_i+\sigma K_1v_i\right) \\
v_{i+1} = \mbox{prox}_{\tau G_1}\left(v_i-\tau K_1^*y_{i+1}\right) \\
U_{i+1} = v_{i+1}+\theta\left(v_{i+1}-v_i\right)
\end{numcases}
\end{subequations}
where the proximity operator is given by
\begin{eqnarray}
\mbox{prox}_F\left(x\right) &:= \arg\min_y \ F\left(y\right)+\frac{1}{2}\left\|y-x\right\|_2^2
\end{eqnarray}

This method mainly consists of two minimization problems for $F^*$ and $G$ respectively, and therefore splits the original functional into two separated parts. Each minimization
problem can be interpreted as an implicit gradient descent method. To avoid the computation of the conjugate functional $F^*$, we can apply the famous Moreau's identity
\cite{moreau1965}:

\begin{equation}
x = \mbox{prox}_{\tau f}\left(x\right)+\tau \mbox{prox}_{\frac{1}{\tau}f^{\ast}}\left(\frac{x}{\tau}\right)
\end{equation}

Then, the first step changes to
\begin{equation}
y_{i+1}= z_i - \sigma \ \arg\min_y \ \frac{1}{\sigma} F_1\left(y\right)+\frac{1}{2}\left\|y-\frac{1}{\sigma}z_i\right\|^2_2
\end{equation}
where $z_i = y_i+\sigma K_1v_i$. From the definition of the operator $K_1$ we see that the dual iterate $y_i$ is actually three-dimensional and most parts of $F_1$ contain only one component. Therefore, we can
separate the single components to receive the minimization problems
\begin{subequations}
\begin{numcases}{}
y^1_{i+1} = z^1_i-\sigma \ \arg\min_{y^1} \ \frac{1}{\sigma}w_k\alpha\sum_i\left\|y_{:,i}^1\right\|_1+\frac{1}{2}\left\|y^1-\frac{1}{\sigma}z_i^1\right\|_2^2 \\
y^2_{i+1} = z^2_i-\sigma \ \arg\min_{y^2} \ \frac{1}{\sigma}w_k\beta\left\|y^2\right\|_1+\frac{1}{2}\left\|y^2-\frac{1}{\sigma}z_i^2\right\|_2^2 \\
y^3_{i+1} = z^3_i-\sigma \ \arg\min_{y^3} \ \frac{1}{\sigma}w_k\delta_S\left(y^3\right)+\frac{1}{2}\left\|y^3-\frac{1}{\sigma}z_i^3\right\|_2^2.
\end{numcases}
\end{subequations}

The iterates $y^1_{i+1}$ and $y^2_{i+1}$ can be directly computed via the well-known soft shrinkage operator. $y^3_{i+1}$ is simply the projection of the minimizer of the $L^2$-term
onto the convex set $S$. The remaining minimization problem for $v_{i+1}$ only consists of two $L^2$-norm terms and can be solved directly as well. In the same way we can proceed
for subproblem (\ref{eq:SubproblemC}). In this case, the resulting minimization problems are quite similar to the ones above and can be solved without more effort.


\section{Numerical Results}
\label{sec:num}

\subsection{Synthesized data tests}

To test our reconstruction method, we created different synthesized data sets consisting of $64\times 64$ pixels with varying complexity of the image structure and with a different
number of subregions and a total of 90 time steps in each sequence. Based on these subregions, we created three-dimensional matrices containing the true image in one time step
within each slice. The underlying concentration in each subregion is related to a realistic shape of a the time-dependent behaviour of the tracer in different tissue types.

The first data set consists of a heart-shaped region and three circles on a static background (see figure \ref{fig:Phantoms} (a)). The two smaller circles are assumed to belong to
the same tissue type and therefore to the same subregion, which causes a total of four subregions, including the background. To simulate a more realistic application of dynamic SPECT
imaging, we used a synthesized representation of a rat liver as a second data set (see figure \ref{fig:Phantoms} (b)). The temporal concentration curves used to simulate the data sets
are shown in figure \ref{fig:TimeCurves}. As before, the total number of subregions was chosen to be equal to four in order to provide a both simple and realistic shape model. \\

\begin{figure}[ht]
\centering
\subfloat[First data set]{\includegraphics[width=7cm]{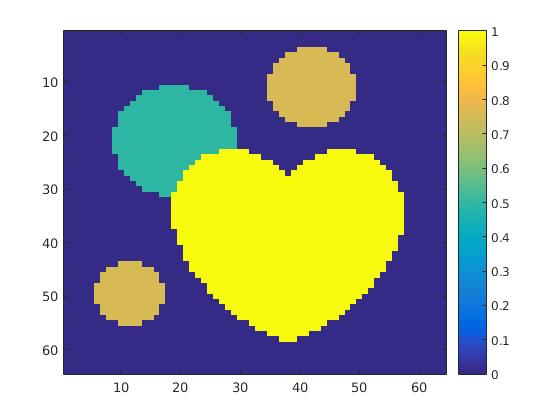}}\qquad
\subfloat[Second data set]{\includegraphics[width=7cm]{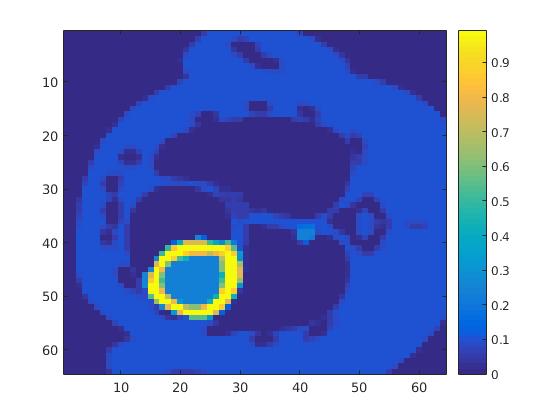}}\qquad
\caption{Phantoms of the tested data sets}
\label{fig:Phantoms}
\end{figure}

\begin{figure}[ht]
\centering
\subfloat[First data set]{\includegraphics[width=7cm]{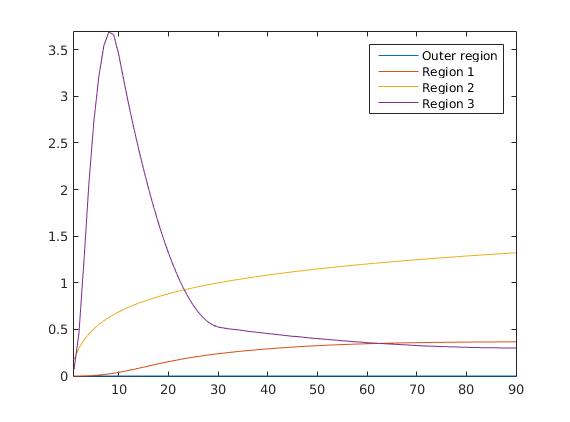}}\qquad
\subfloat[Second data set]{\includegraphics[width=7cm]{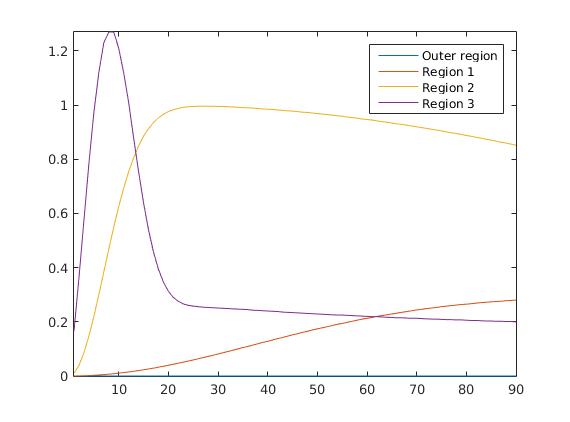}}\qquad
\caption{Temporal concentration curves in all subregions. (a) Region 1 corresponds to the circle which is partly behind the heart, region 2 are the two outer circles and region 3 is
the heart. (b) Region 1 represents the outer tissue in the phantom, region 2 the ring-shaped region of the liver and region 3 the inner of region 2.}
\label{fig:TimeCurves}
\end{figure}

To simulate the synthesized SPECT data, we apply a Radon transform assuming a double detector gamma camera, which counts photons from two opposing projection angles per
time step. For the more simple data set, we let the camera rotate clockwise around two degree per time step, in case of the complex data set we used modified projection angles, i.e.
the camera alternatingly projects from an angle of i and 45+i degrees, in order to simplify the reconstruction. Each collimator consists of $95$ detector bins, so we obtain $190$ data
points per time step and projection angle. The resulting sinogram data of the two underlying data sets are shown in figure \ref{fig:Sinograms}.

\begin{figure}[ht]
\centering
\subfloat[Sinogram data of first data set]{\includegraphics[width=7cm]{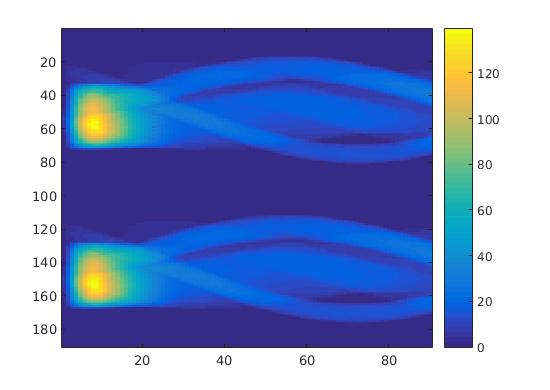}}\qquad
\subfloat[Sinogram data of second data set]{\includegraphics[width=7cm]{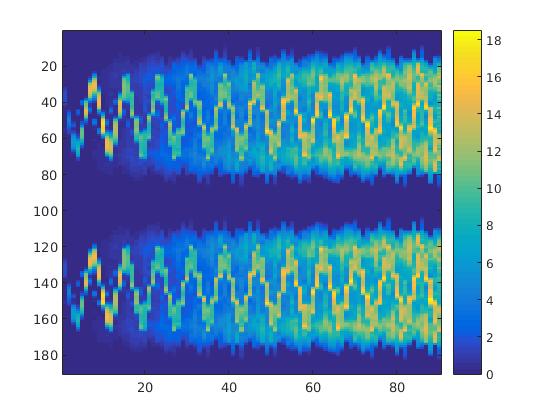}}\qquad
\caption{Sinograms of the tested data sets. For the first one, consecutive camera angles were used, for the second one, the angles alternatingly equal i and 45+i degrees.}
\label{fig:Sinograms}
\end{figure}

\begin{figure}[ht]
\centering
\subfloat[Sinogram data of first data set with Poisson noise]{\includegraphics[width=7cm]{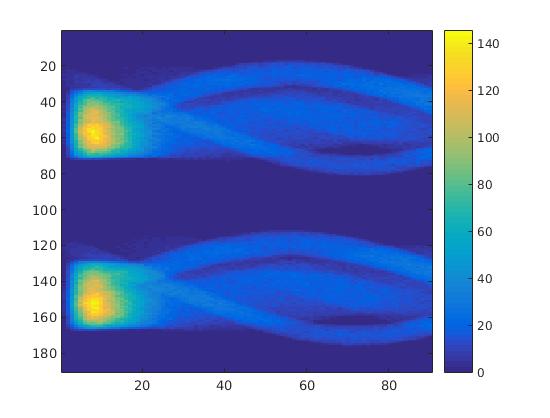}}\qquad
\subfloat[Sinogram data of second data set with Poisson noise]{\includegraphics[width=7cm]{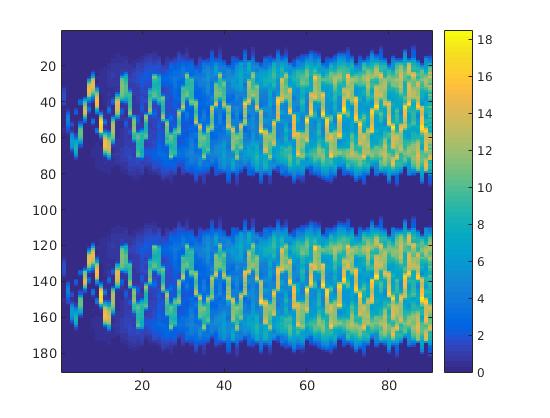}}\qquad
\caption{Sinograms of the tested data sets corrupted with Poisson noise (cp. figure \ref{fig:Sinograms}).}
\label{fig:NoisySinograms}
\end{figure}

Both data sets were reconstructed via the previously described forward-backward EM-type method in MATLAB$^{\copyright}$. $\sigma$ and $\tau$ in the subproblems are chosen as one over
the maximum over the sums of all rows (resp. columns) of the corresponding operator $K$ to guarantee the condition $\sigma\tau\|K\|^2<1$ for which convergence was shown in
\cite{Chambolle2010}. According to \cite{Chambolle2010}, we also chose $\theta=1$ in both subproblems. The damping parameters $w_k$ are set to $0.9$ to remain close to the undamped
version of the algorithm (convergence of the method for some conditions on $w_k$ have been proven in \cite{burger2008}).

The parameters $\alpha$, $\beta$ and $\delta$ were optimized by comparing the final results
with the existing ground truth in both cases. Here we mention that the choice of parameters is not a trivial task, since the result strongly varies with a change in the parameters.
In figure \ref{fig:ErrorParams}, the scaling between the error in the $L^2$-norm between exact and reconstructed image sequence per pixel per time step and the choice of each
parameter out of a certain range is displayed examplarily for the heart data set. Here, we chose $\alpha\in[0,0.5]$, $\beta\in[0,1]$ and $\delta\in[0,2]$ and kept two parameters
fixed while plotting the error in the third one. The adaption of parameters in case of real data and, if possible, the elimination of some of them remains a future task.

\begin{figure}[ht]
\centering
\subfloat[Error scaling for $\alpha$]{\includegraphics[width=5cm]{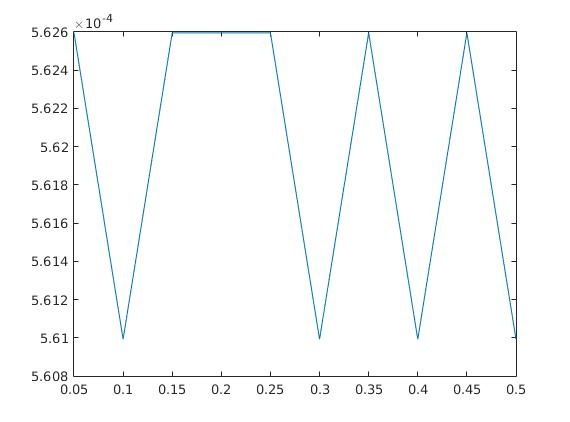}} \
\subfloat[Error scaling for $\beta$]{\includegraphics[width=5cm]{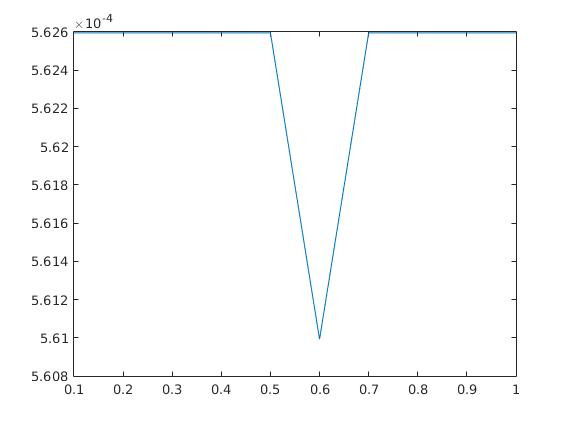}} \
\subfloat[Error scaling for $\delta$]{\includegraphics[width=5cm]{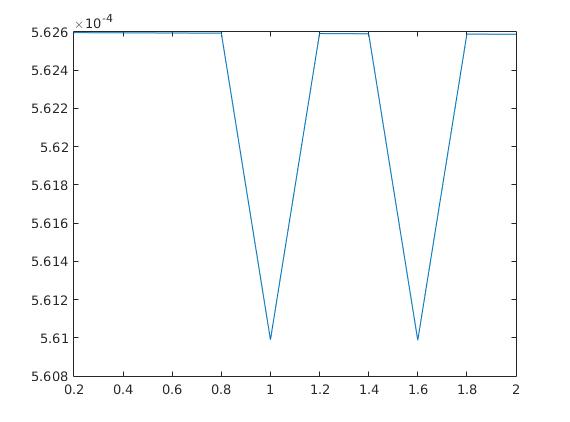}} \
\caption{$L^2$-error between exact and reconstructed data for different regularization parameters for the first data set (for given data without noise)}
\label{fig:ErrorParams}
\end{figure}

In a first test, every image sequence was reconstructed out of the exact given sinograms. Additionally we tested noise corrupted data by first scaling the sinogram by a parameter $\frac{1}{10^{11}}$, corrupting them with Poisson noise via the MATLAB imaging toolbox command \texttt{imnoise} and finally rescaling the image to the original range (see figure \ref{fig:NoisySinograms}). The average
count number per time step (i.e. the average of the discrete $\ell_1$-norm of the data at each time step) is approximately $2.5\cdot 10^3$ in case of the heart-shaped data set and
ca. $800$ in case of the rat liver simulation. The results at
a certain number of time steps can be seen in figure \ref{fig:ResultSet1} and \ref{fig:ResultSet2}. For comparison, we additionally performed a reconstruction with a simple
alternating EM method, keeping the assumption that the tracer can be modelled as a sum of indicator functions and subconcentration curves, but neglecting any regularization terms.
In all tests, the outer iteration number was set to 1000 with 10000 inner iterations per subproblem, to obtain a result within a reasonable time period. As stopping criterion, we
chose the primal dual residual (cp. \cite{Goldstein2013}) for the inner and the maximum over the Frobenius norms of $U$ and $C$ for the outer iterations. The results are displayed
in \ref{fig:ResultSet1} and \ref{fig:ResultSet2} respectively.

\begin{figure}[h!]
\captionsetup[subfigure]{labelformat=empty}
\centering
\subfloat[t=1]{\includegraphics[width=2cm]{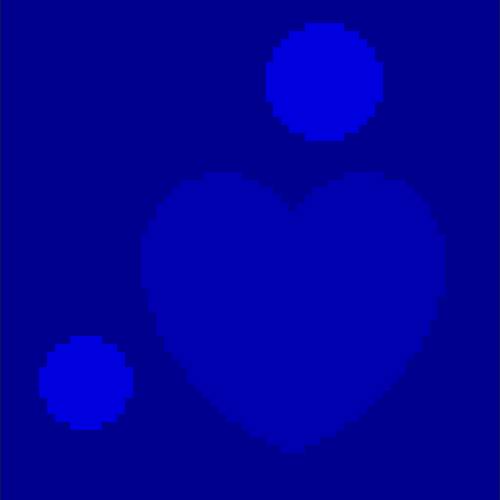}} \
\subfloat[t=5]{\includegraphics[width=2cm]{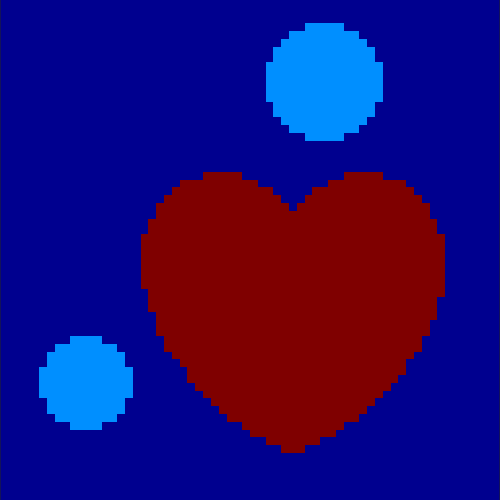}} \
\subfloat[t=10]{\includegraphics[width=2cm]{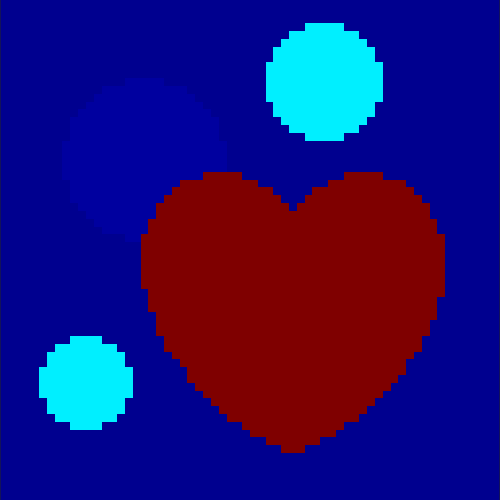}} \
\subfloat[t=15]{\includegraphics[width=2cm]{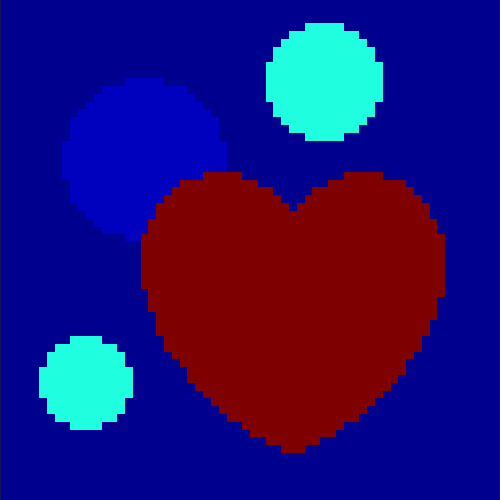}} \
\subfloat[t=25]{\includegraphics[width=2cm]{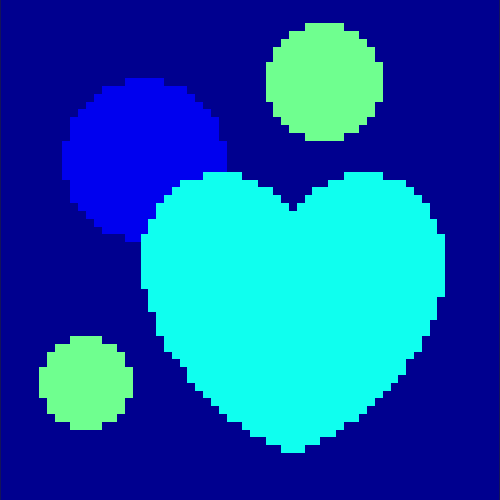}} \
\subfloat[t=50]{\includegraphics[width=2cm]{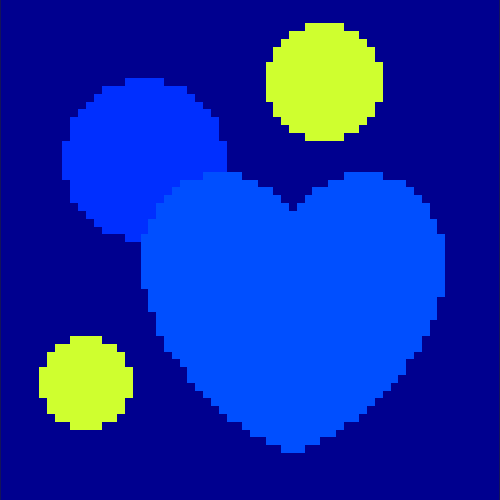}} \
\subfloat[t=90]{\includegraphics[width=2cm]{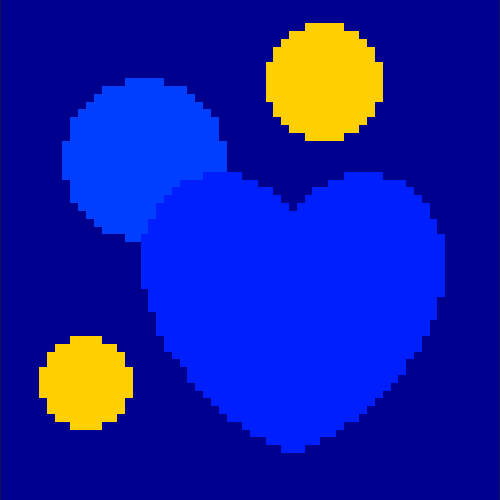}} \
\vspace{0.01cm}
\subfloat[t=1]{\includegraphics[width=2cm]{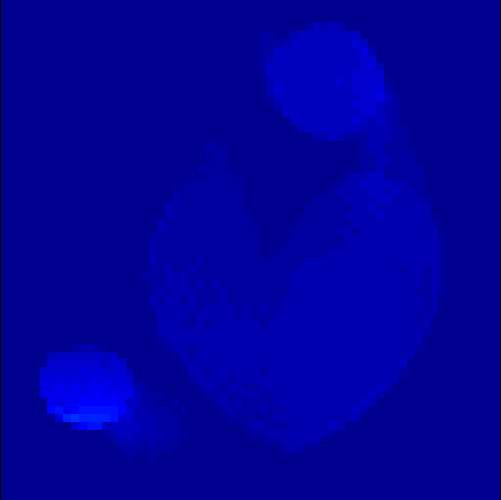}} \
\subfloat[t=5]{\includegraphics[width=2cm]{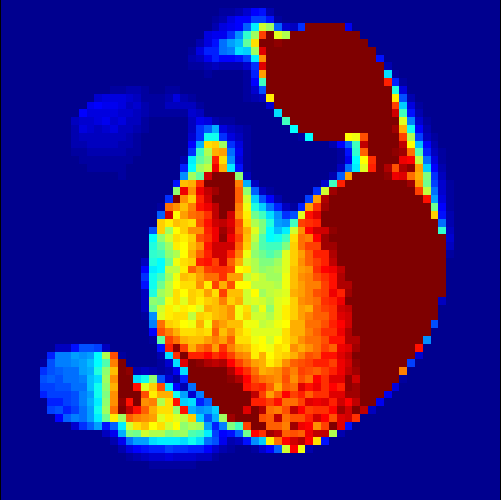}} \
\subfloat[t=10]{\includegraphics[width=2cm]{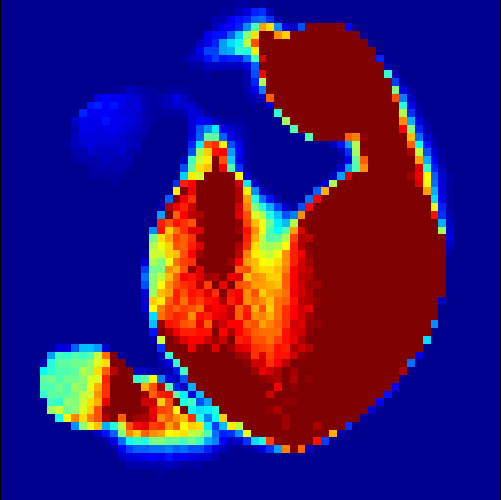}} \
\subfloat[t=15]{\includegraphics[width=2cm]{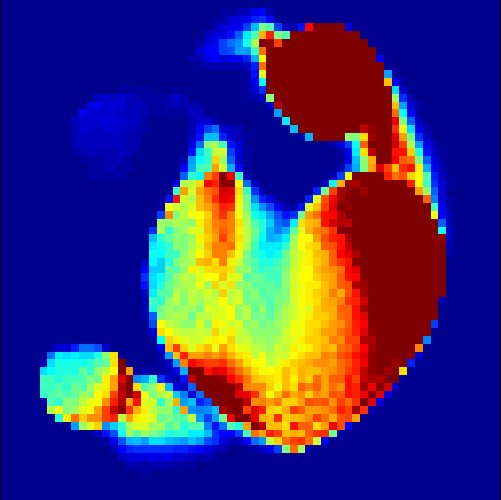}} \
\subfloat[t=25]{\includegraphics[width=2cm]{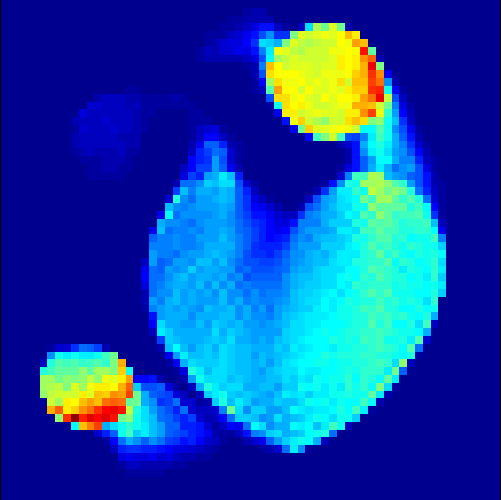}} \
\subfloat[t=50]{\includegraphics[width=2cm]{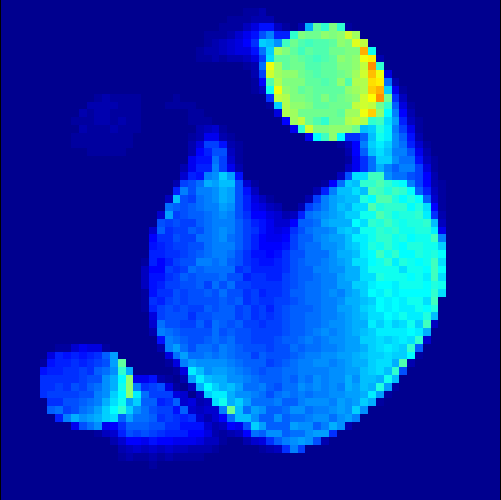}} \
\subfloat[t=90]{\includegraphics[width=2cm]{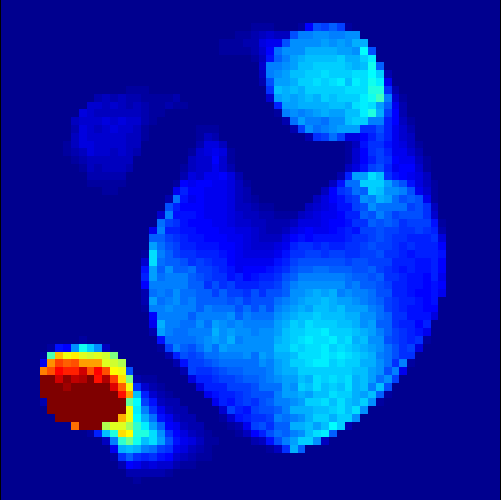}} \
\vspace{0.01cm}
\subfloat[t=1]{\includegraphics[width=2cm]{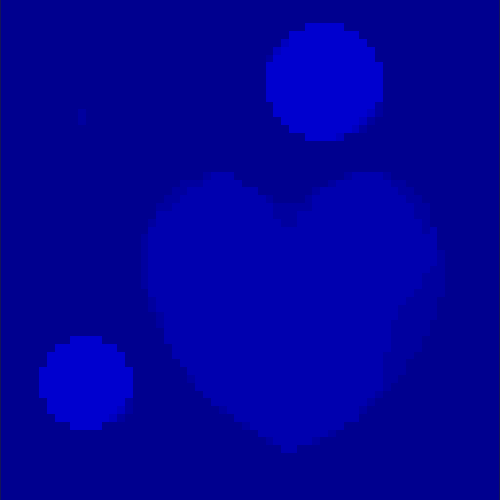}} \
\subfloat[t=5]{\includegraphics[width=2cm]{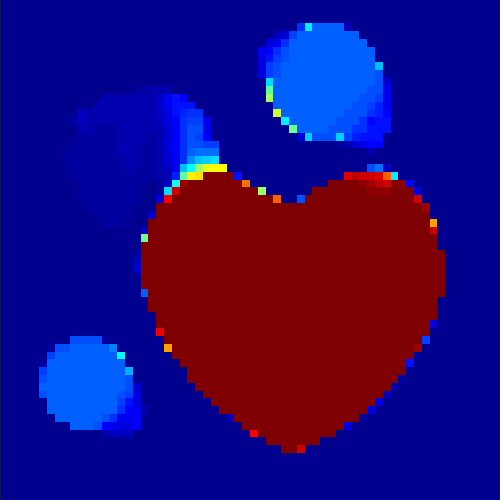}} \
\subfloat[t=10]{\includegraphics[width=2cm]{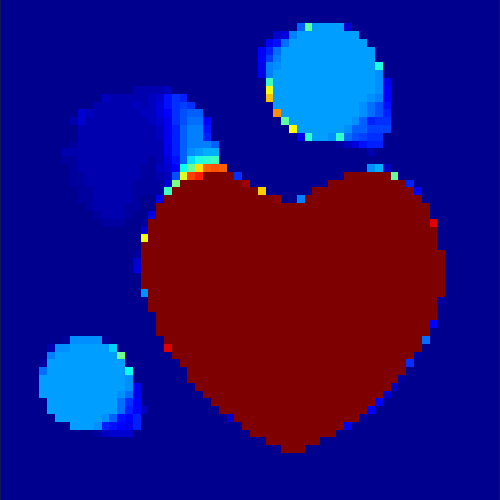}} \
\subfloat[t=15]{\includegraphics[width=2cm]{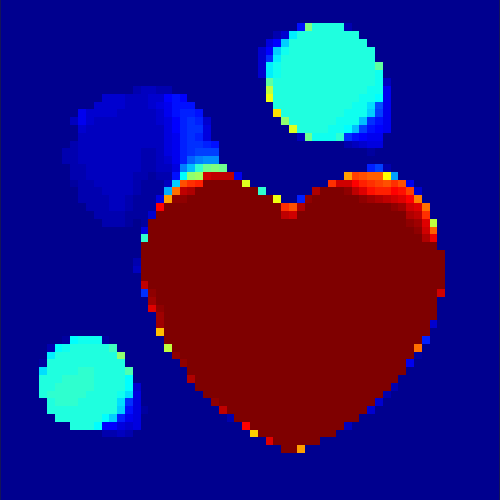}} \
\subfloat[t=25]{\includegraphics[width=2cm]{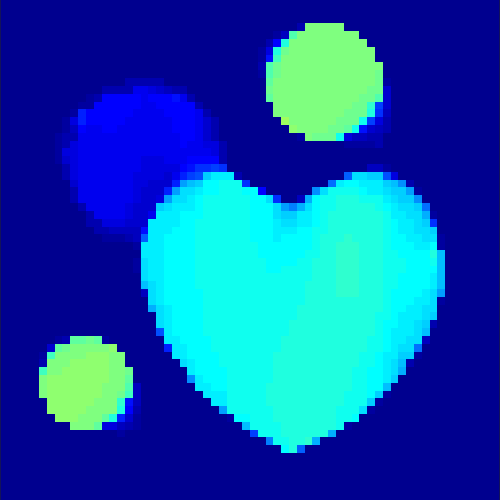}} \
\subfloat[t=50]{\includegraphics[width=2cm]{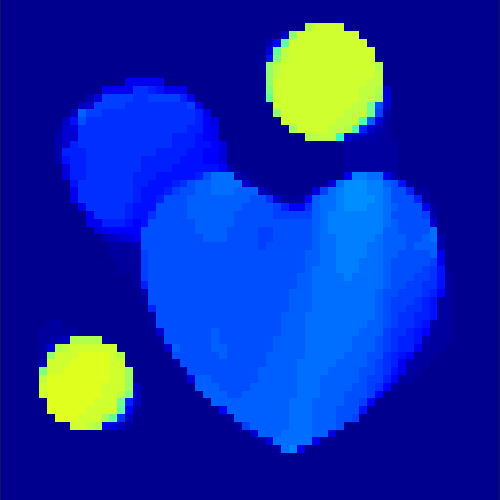}} \
\subfloat[t=90]{\includegraphics[width=2cm]{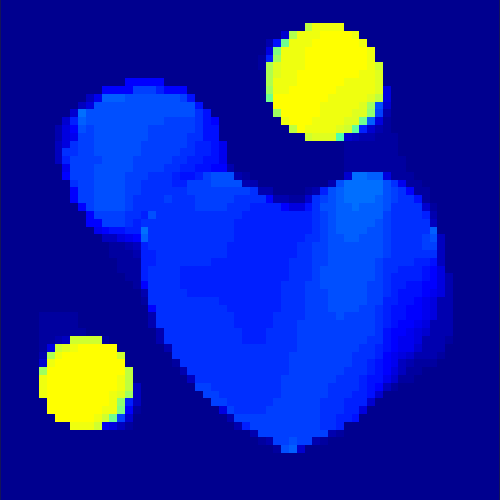}}
\vspace{0.01cm}
\subfloat[t=1]{\includegraphics[width=2cm]{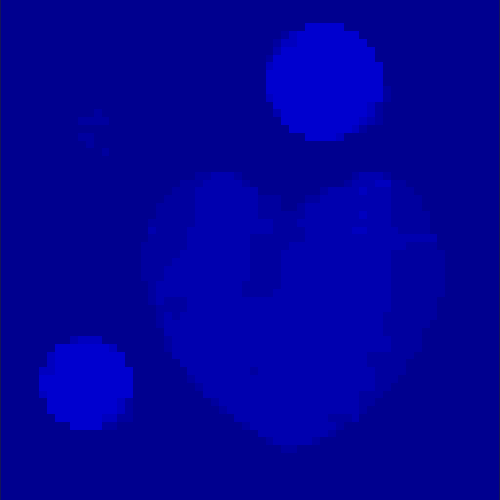}} \
\subfloat[t=5]{\includegraphics[width=2cm]{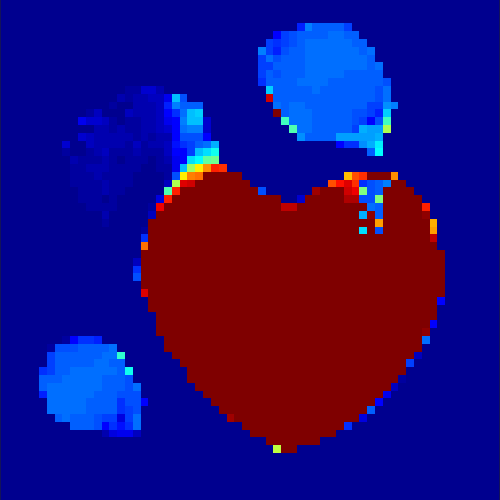}} \
\subfloat[t=10]{\includegraphics[width=2cm]{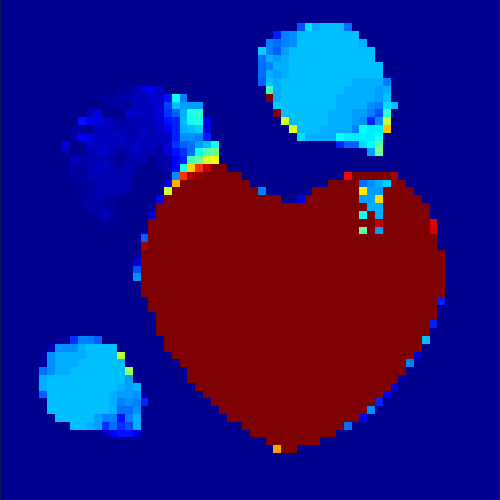}} \
\subfloat[t=15]{\includegraphics[width=2cm]{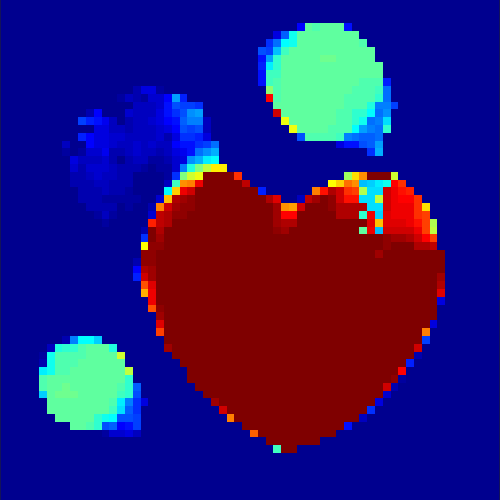}} \
\subfloat[t=25]{\includegraphics[width=2cm]{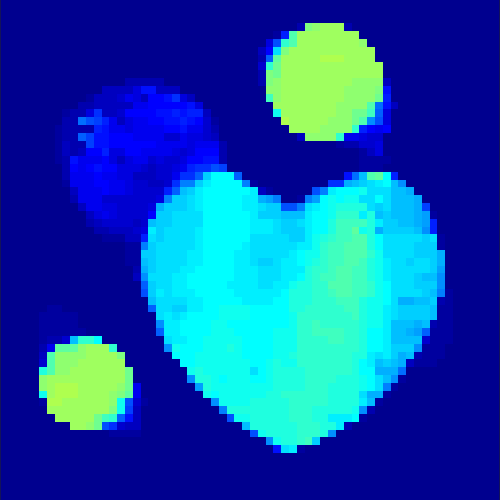}} \
\subfloat[t=50]{\includegraphics[width=2cm]{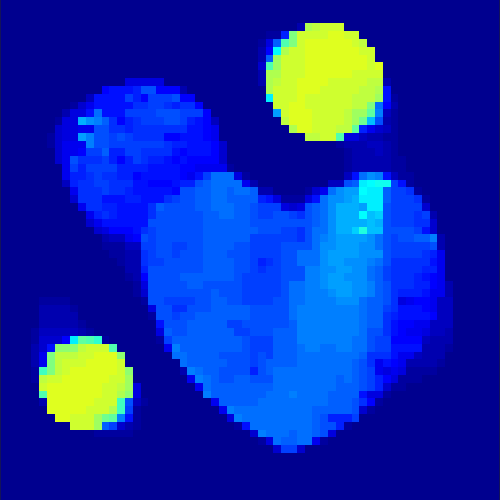}} \
\subfloat[t=90]{\includegraphics[width=2cm]{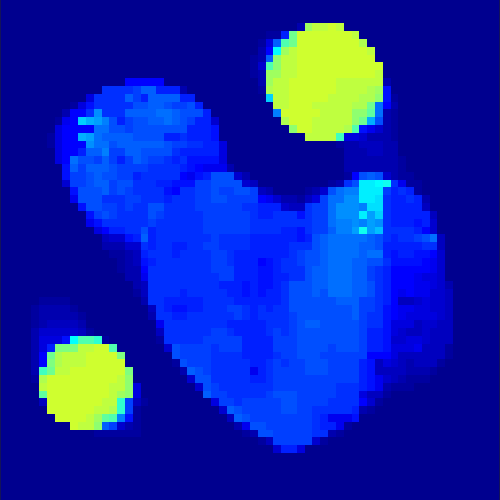}}
\caption{Reconstruction of simple regions: Exact image sequence (first row), reconstructed solution with simple alternating EM method without regularization (second row), reconstructed solution with exact given data (third row), reconstructed solution with Poisson
noise-corrupted data (fourth row)}
\label{fig:ResultSet1}
\end{figure}

\begin{figure}[h!]
\captionsetup[subfigure]{labelformat=empty}
\centering
\subfloat[t=1]{\includegraphics[width=2cm]{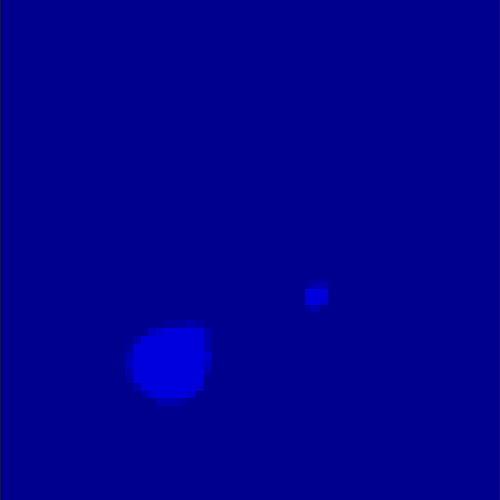}} \
\subfloat[t=5]{\includegraphics[width=2cm]{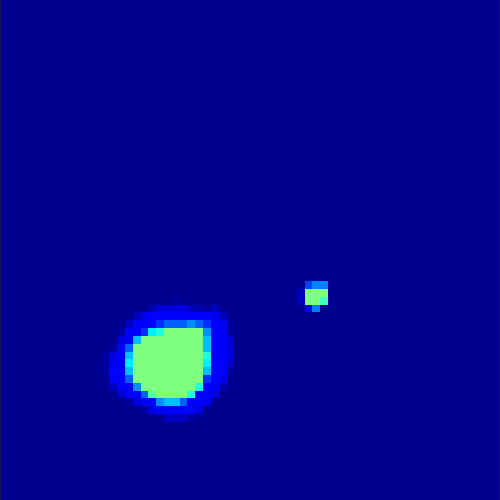}} \
\subfloat[t=10]{\includegraphics[width=2cm]{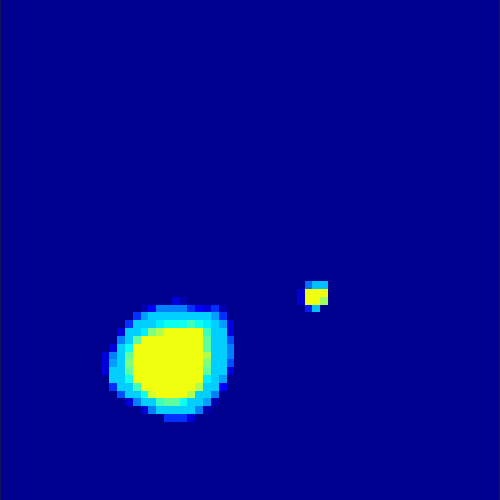}} \
\subfloat[t=15]{\includegraphics[width=2cm]{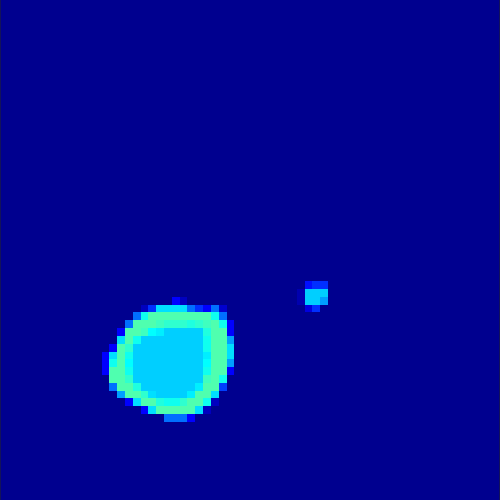}} \
\subfloat[t=25]{\includegraphics[width=2cm]{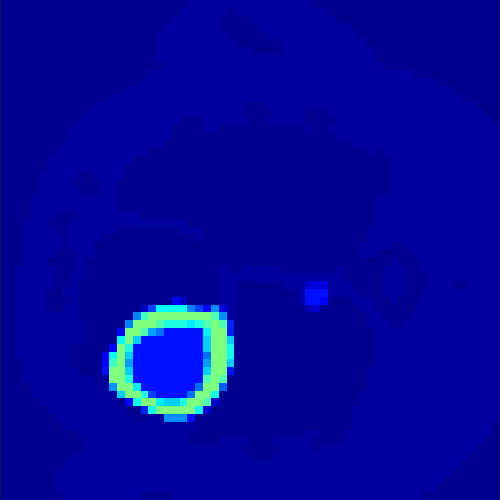}} \
\subfloat[t=50]{\includegraphics[width=2cm]{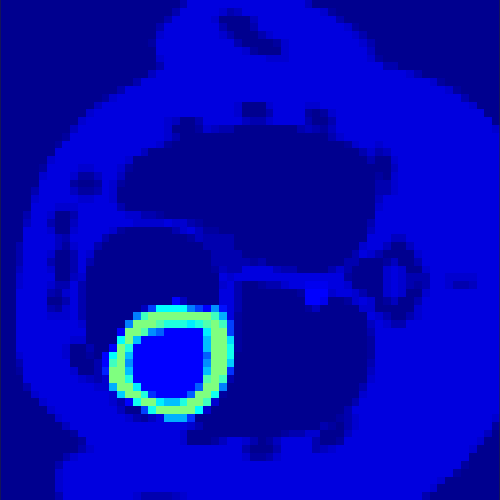}} \
\subfloat[t=90]{\includegraphics[width=2cm]{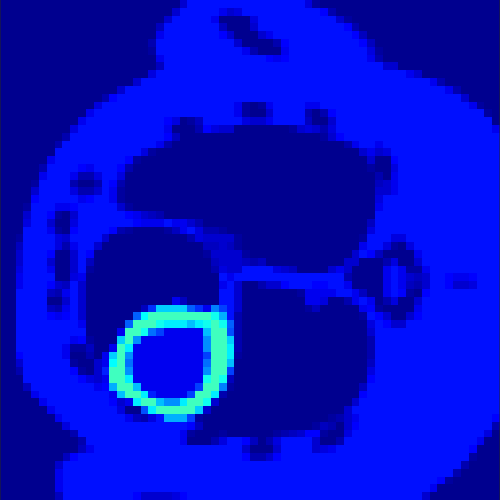}} \
\vspace{0.01cm}
\subfloat[t=1]{\includegraphics[width=2cm]{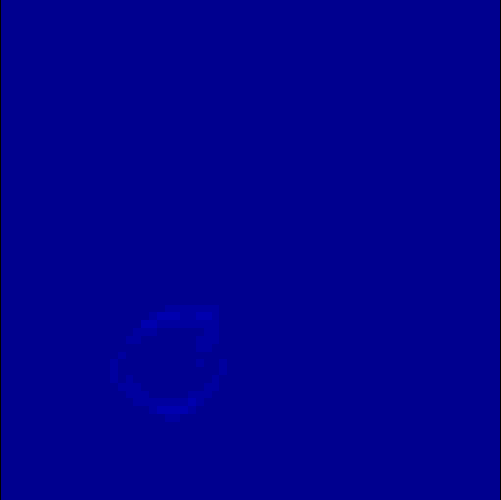}} \
\subfloat[t=5]{\includegraphics[width=2cm]{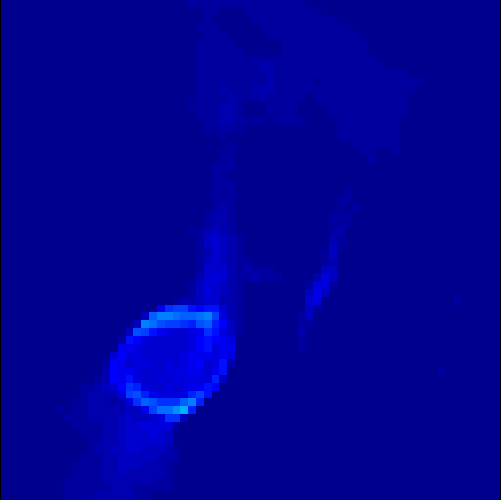}} \
\subfloat[t=10]{\includegraphics[width=2cm]{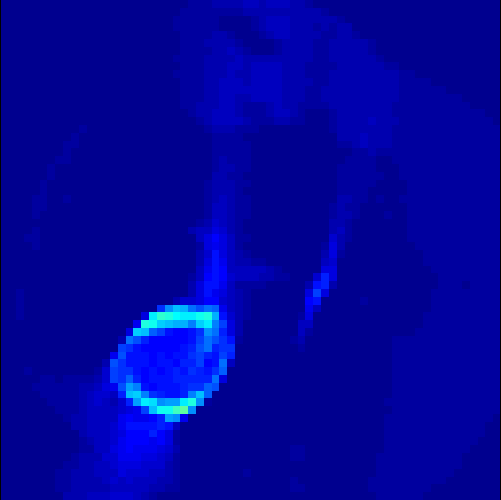}} \
\subfloat[t=15]{\includegraphics[width=2cm]{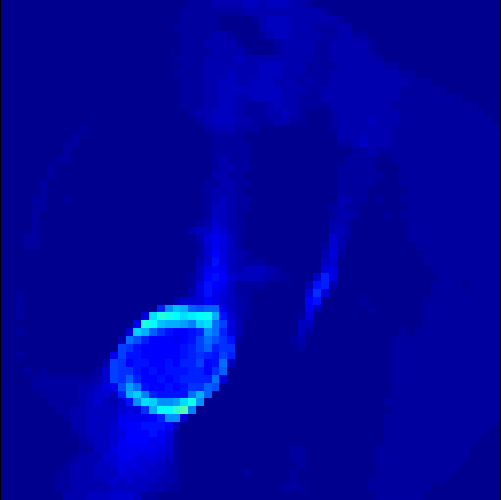}} \
\subfloat[t=25]{\includegraphics[width=2cm]{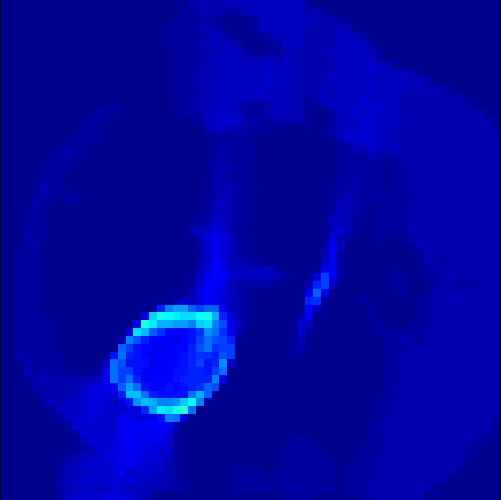}} \
\subfloat[t=50]{\includegraphics[width=2cm]{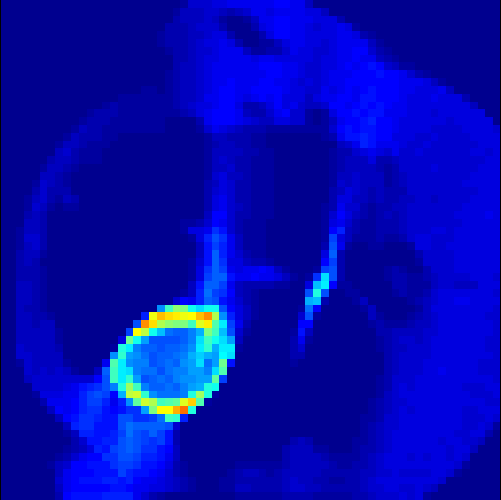}} \
\subfloat[t=90]{\includegraphics[width=2cm]{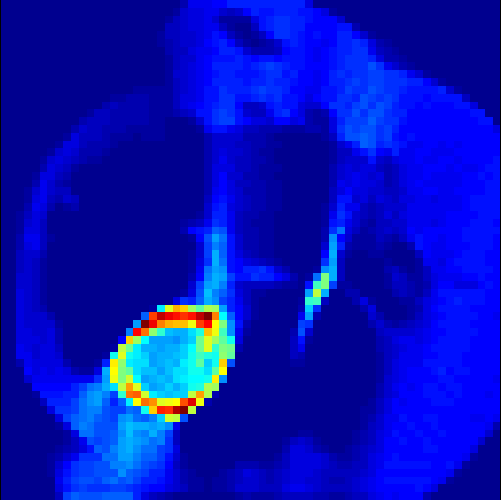}} \
\vspace{0.01cm}
\subfloat[t=1]{\includegraphics[width=2cm]{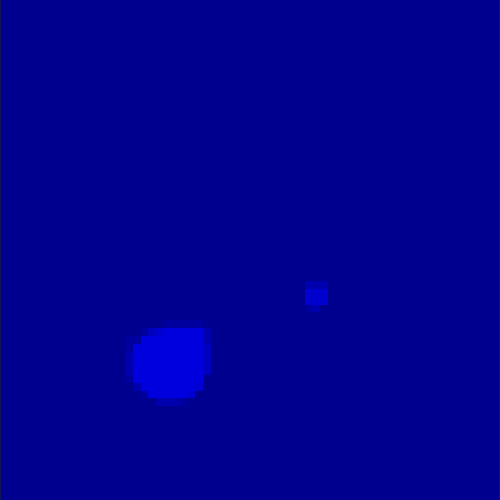}} \
\subfloat[t=5]{\includegraphics[width=2cm]{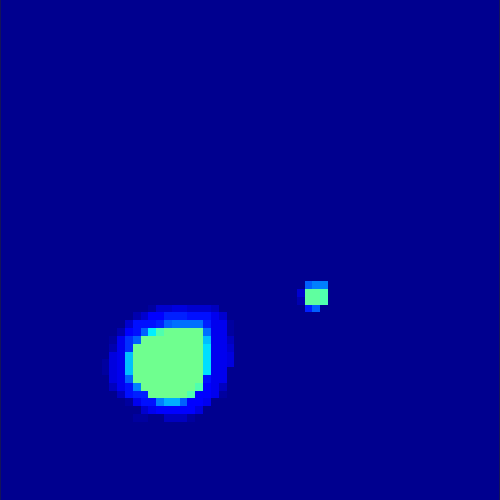}} \
\subfloat[t=10]{\includegraphics[width=2cm]{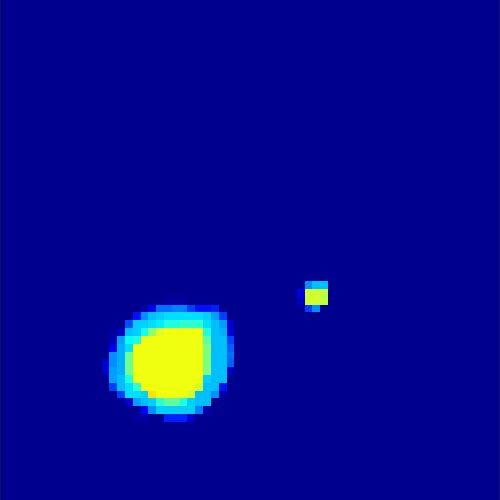}} \
\subfloat[t=15]{\includegraphics[width=2cm]{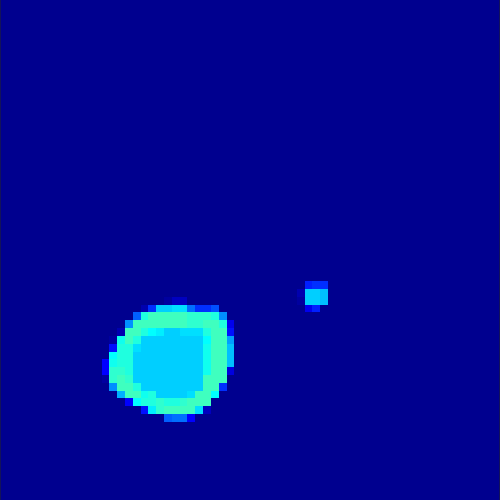}} \
\subfloat[t=25]{\includegraphics[width=2cm]{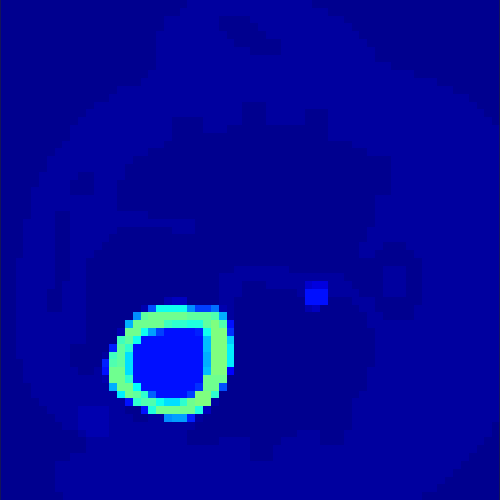}} \
\subfloat[t=50]{\includegraphics[width=2cm]{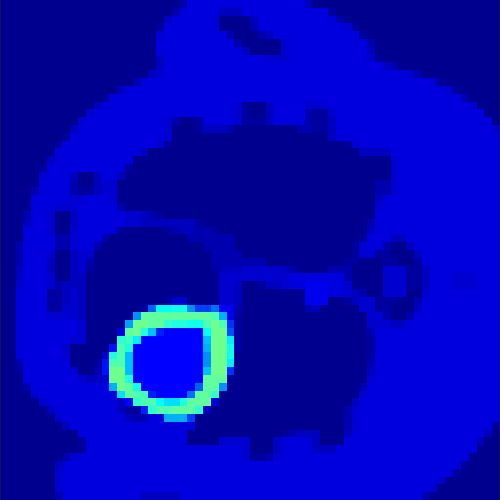}} \
\subfloat[t=90]{\includegraphics[width=2cm]{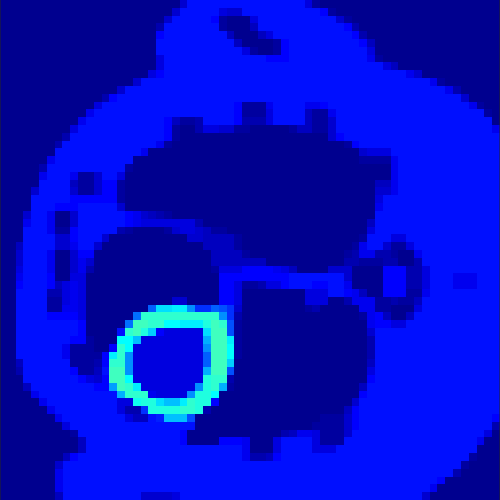}}
\vspace{0.01cm}
\subfloat[t=1]{\includegraphics[width=2cm]{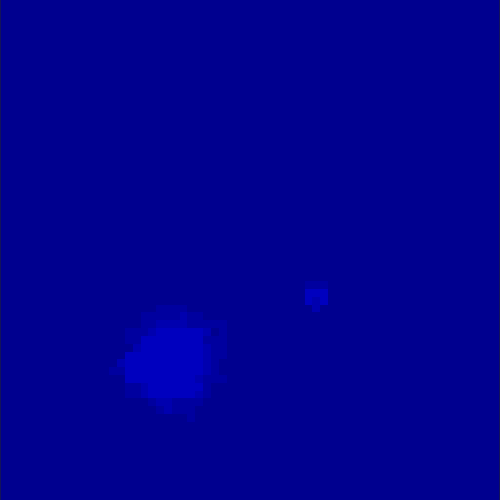}} \
\subfloat[t=5]{\includegraphics[width=2cm]{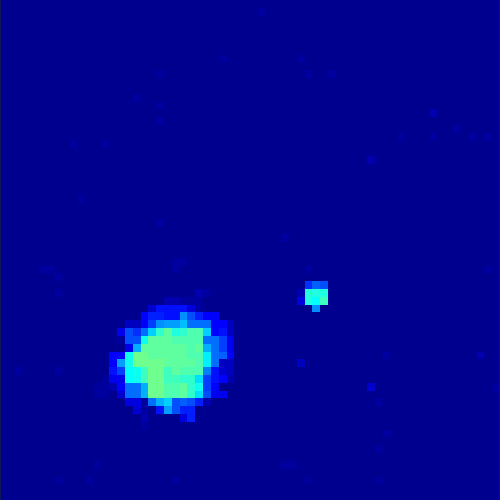}} \
\subfloat[t=10]{\includegraphics[width=2cm]{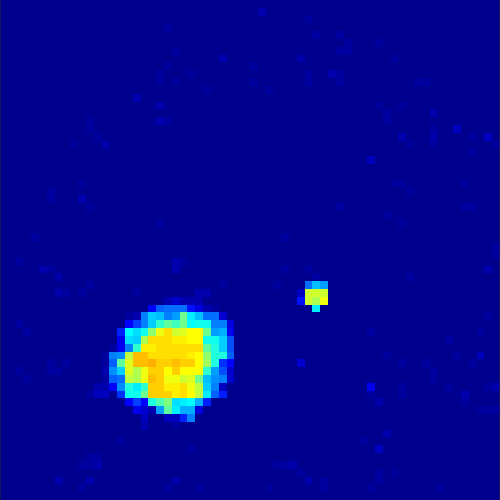}} \
\subfloat[t=15]{\includegraphics[width=2cm]{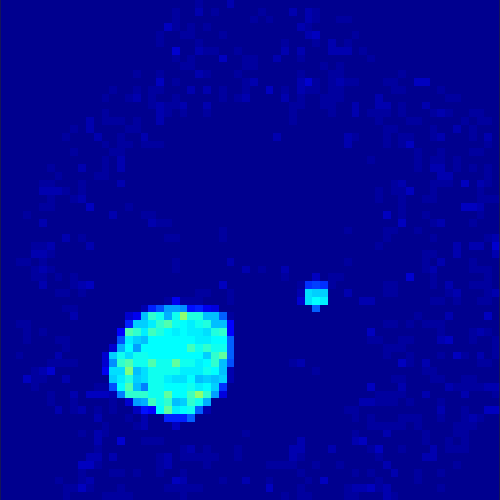}} \
\subfloat[t=25]{\includegraphics[width=2cm]{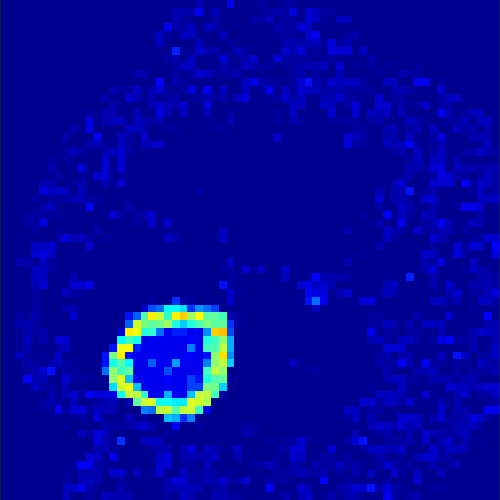}} \
\subfloat[t=50]{\includegraphics[width=2cm]{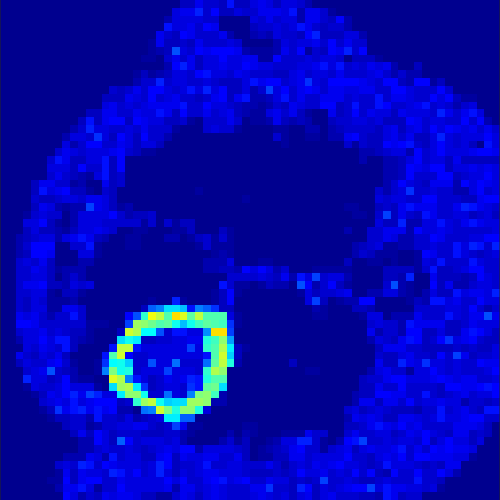}} \
\subfloat[t=90]{\includegraphics[width=2cm]{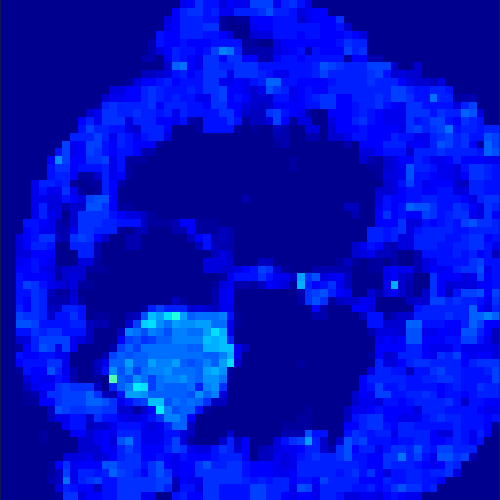}}
\caption{Reconstruction of synthesized rat liver: Exact image sequence (first row), reconstructed solution with simple alternating EM method without regularization (second row),
reconstructed solution with exact given data (third row), reconstructed solution with Poisson noise-corrupted data (fourth row)}
\label{fig:ResultSet2}
\end{figure}

As one can see in both figures, the reconstruction method applied to each data set performs very well, especially in contrast to the simple alternating EM method. This clearly shows
the benefits of the proposed regularization methods. In case of noise-free given data, the shape of every object, where especially
the heart is of higher interest, is clearly defined. As expected, we often observe errors in the edges of each region and where two regions are directly connected (the heart and
the upper left circle). This causes the algorithm to incorrectly assign these pixels to another region. Furthermore, the reconstruction difficulties increase with an increase in noise.
Some more pixels are assigned to the wrong region, which leads to a small hole-like structure within the heart region and causes a slight blurring effect. In the second data set the
method clearly outperforms several other approaches by providing very clearly defined regions and even reconstructing fine structures of the phantom. However, as mentioned before,
a clear reconstruction of the rat liver required highly optimized parameter sets, which makes the whole problem quite susceptible to parameter changes.

All in all, the results presented in this work offer a novel point of view to the topic and provide a flexible reconstruction approach which allows some room for improvement until it
is suitable for being applied to real clinical data (cp. section \ref{sec:Conclusion}).


\subsection{Monte Carlo Simulation}

In order to test the behaviour of the proposed method in a more realistic, random-based test case, we performed a Monte Carlo simulation for dynamic SPECT imaging. First, we created
a simple $129\times 129$ image phantom consisting of an outer and two inner circles which represents the structure of the region of interest (see figure \ref{fig:MonteCarloData}(a)). Within those regions we
assumed concentration curves over a time period of 90 time steps as displayed in figure \ref{fig:MonteCarloData}(b). Based on the tracer intensity in an image frame at each time step, we created a variable
number of random decay events (where the number is proportional to the average concentration in one pixel in the whole image frame per time step) with a probability proportional to the concentration in every
subregion. They are detected by a virtual double head gamma camera rotating around the patient by 46 degrees per time step, which consists of 374 detector bins. Every simulated decay event is projected onto the
scanner and counted by the corresponding detector bin.

\begin{figure}[ht]
\centering
\subfloat[Monte Carlo image phantom]{\includegraphics[width=7cm]{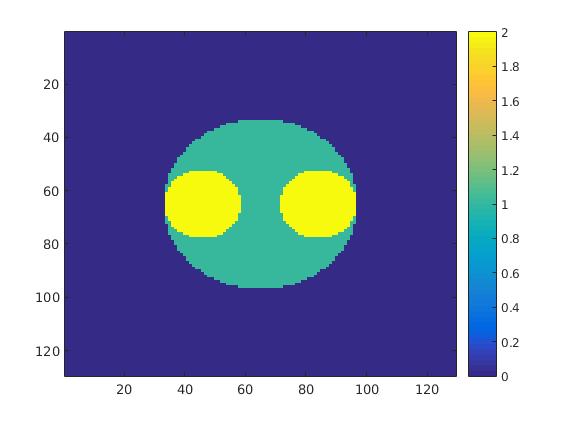}} \
\subfloat[Simulated concentration curves]{\includegraphics[width=7cm]{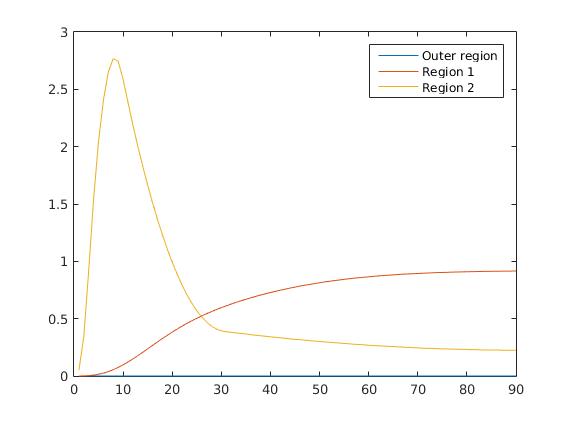}} \
\caption{Monte Carlo simulation. Region 1 corresponds to the outer circle, region 2 to the two inner circles.}
\label{fig:MonteCarloData}
\end{figure}

In two different tests we fixed the number of events counted by the detector equal to $\lambda=20000$ (resp. $\lambda=200000$) times the average concentration in one pixel. The resulting sinogram images of the accumulated counts
in each bin are shown in figure \ref{fig:MonteCarloSinograms}.

\begin{figure}[ht]
\centering
\subfloat[Sinogram for $\lambda=20000$]{\includegraphics[width=7cm]{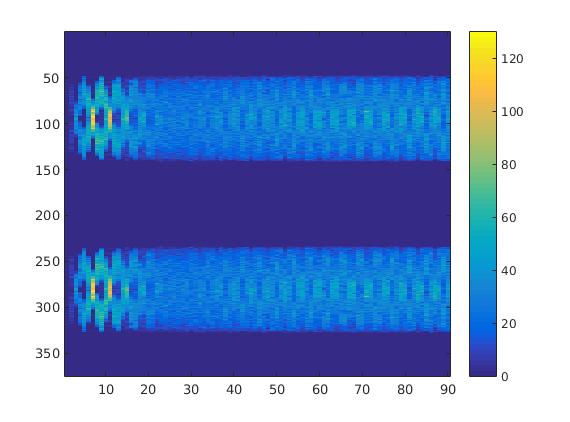}} \
\subfloat[Sinogram for $\lambda=200000$]{\includegraphics[width=7cm]{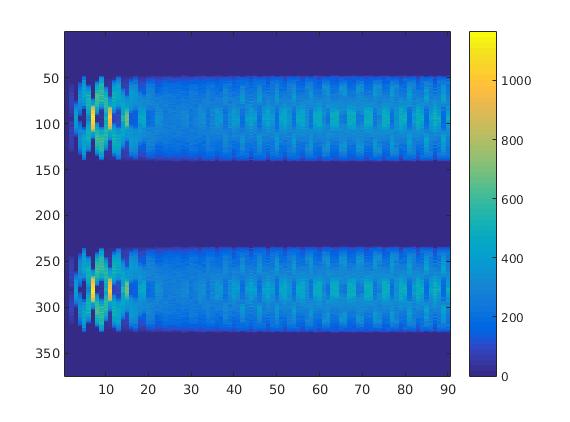}} \
\caption{Monte Carlo sinogram data}
\label{fig:MonteCarloSinograms}
\end{figure}

Based on the sinogram data we applied the proposed algorithm in order to reconstruct the original image sequence. The results for both test cases are shown in figure \ref{fig:MonteCarloResults}.

\begin{figure}[h!]
\captionsetup[subfigure]{labelformat=empty}
\centering
\subfloat[t=1]{\includegraphics[width=2cm]{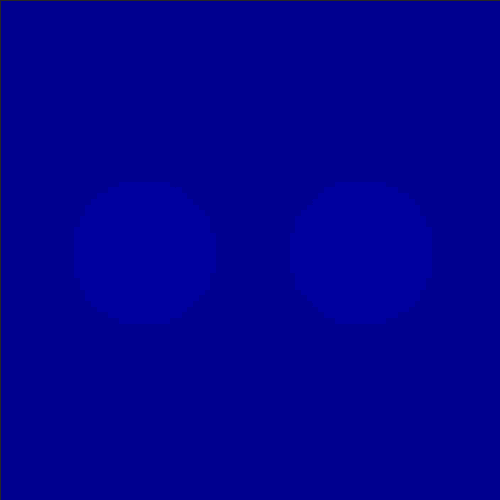}} \
\subfloat[t=5]{\includegraphics[width=2cm]{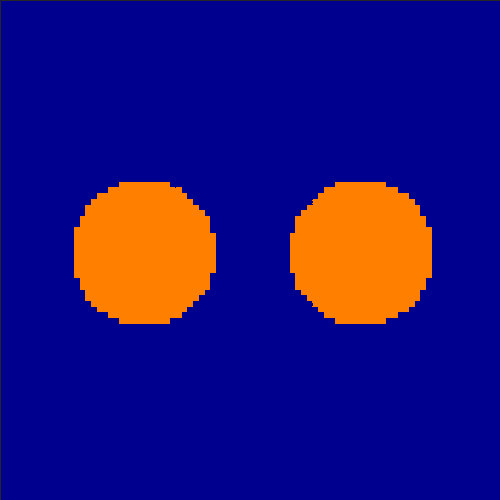}} \
\subfloat[t=10]{\includegraphics[width=2cm]{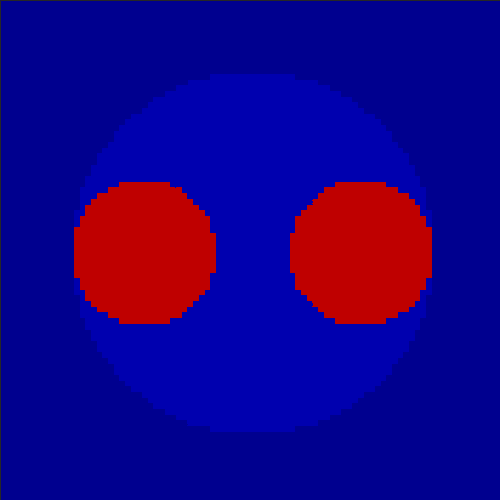}} \
\subfloat[t=15]{\includegraphics[width=2cm]{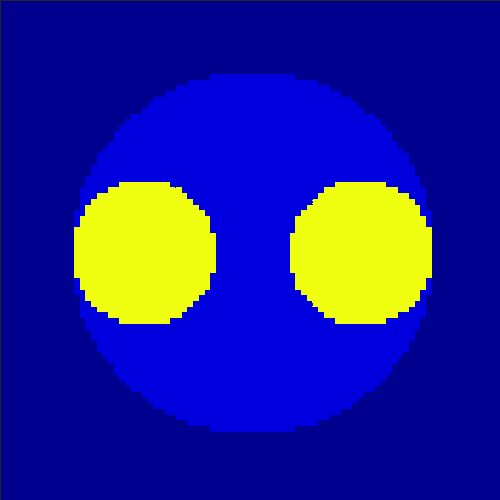}} \
\subfloat[t=25]{\includegraphics[width=2cm]{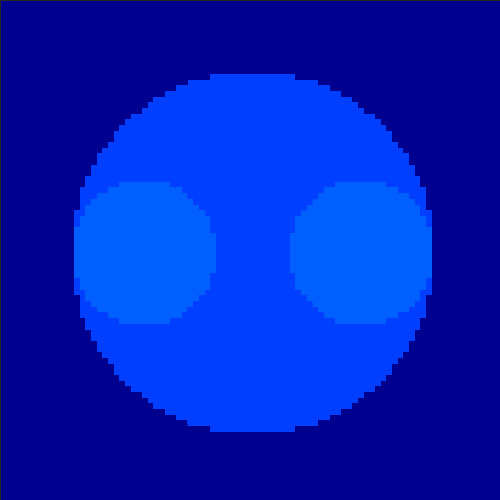}} \
\subfloat[t=50]{\includegraphics[width=2cm]{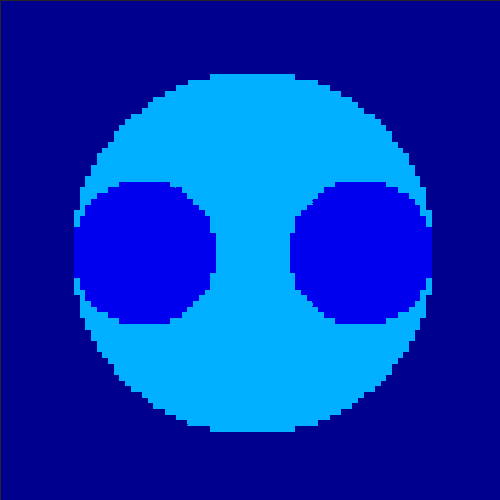}} \
\subfloat[t=90]{\includegraphics[width=2cm]{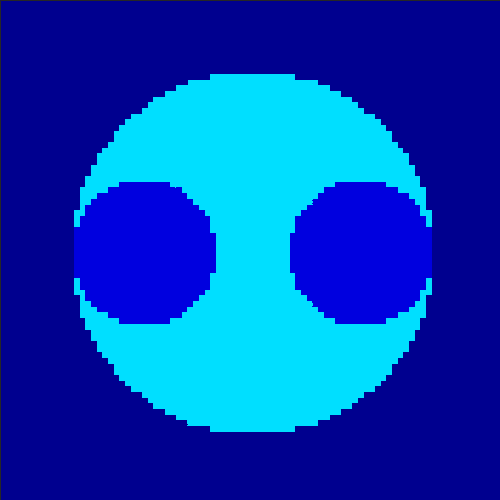}} \
\vspace{0.01cm}
\subfloat[t=1]{\includegraphics[width=2cm]{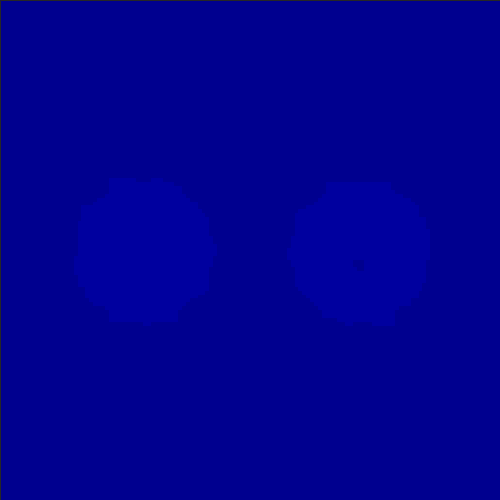}} \
\subfloat[t=5]{\includegraphics[width=2cm]{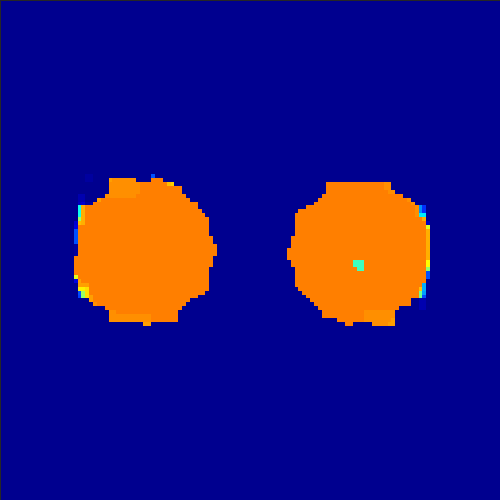}} \
\subfloat[t=10]{\includegraphics[width=2cm]{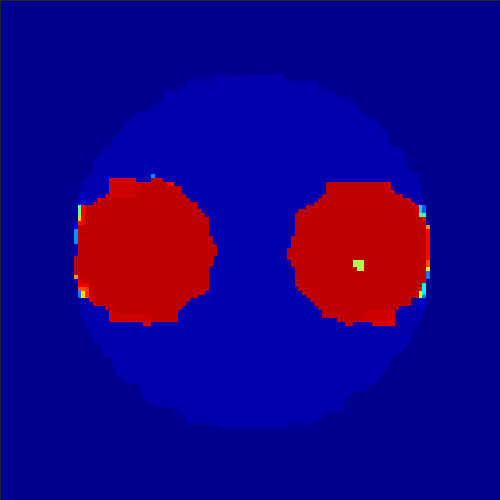}} \
\subfloat[t=15]{\includegraphics[width=2cm]{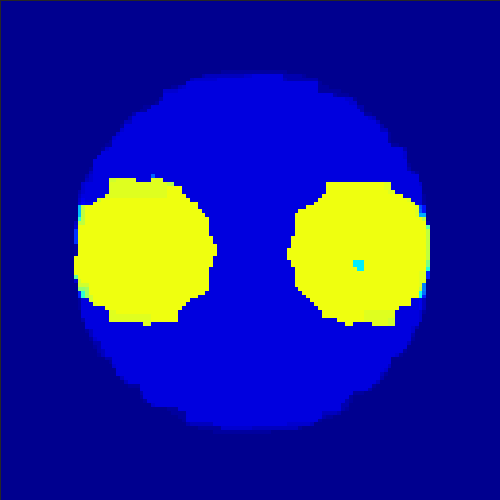}} \
\subfloat[t=25]{\includegraphics[width=2cm]{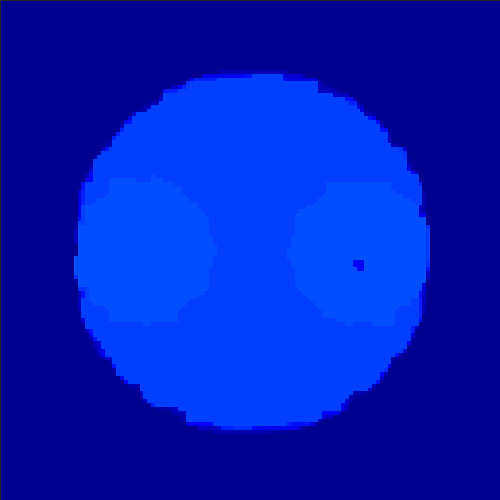}} \
\subfloat[t=50]{\includegraphics[width=2cm]{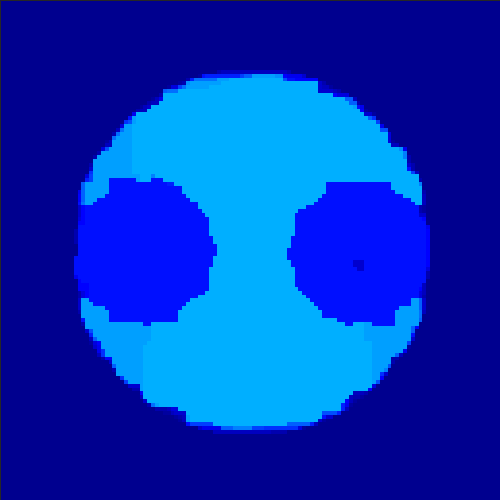}} \
\subfloat[t=90]{\includegraphics[width=2cm]{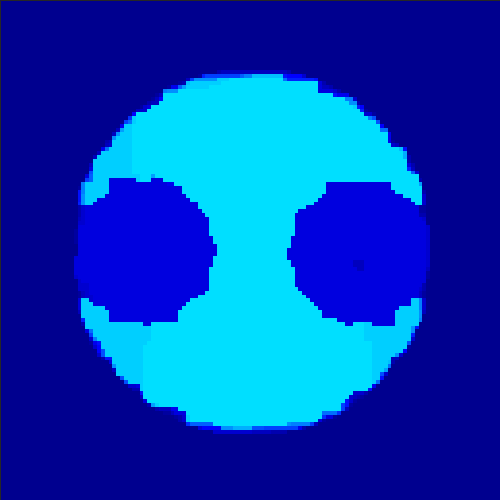}} \
\vspace{0.01cm}
\subfloat[t=1]{\includegraphics[width=2cm]{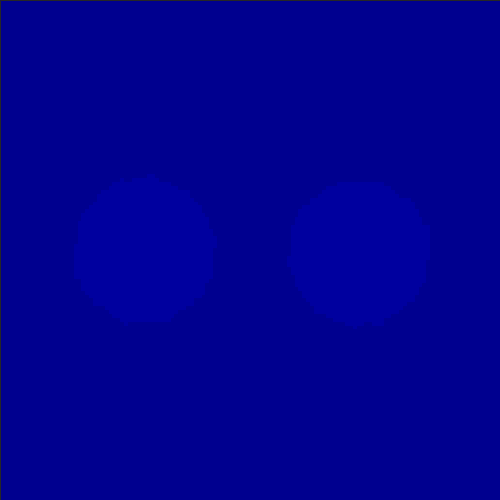}} \
\subfloat[t=5]{\includegraphics[width=2cm]{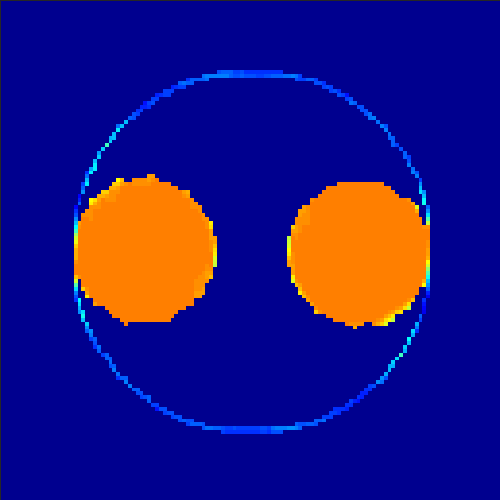}} \
\subfloat[t=10]{\includegraphics[width=2cm]{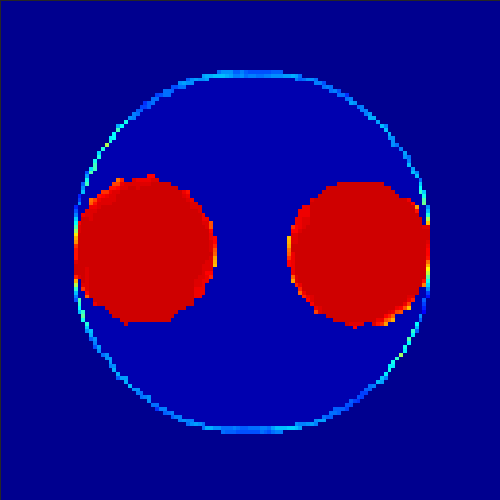}} \
\subfloat[t=15]{\includegraphics[width=2cm]{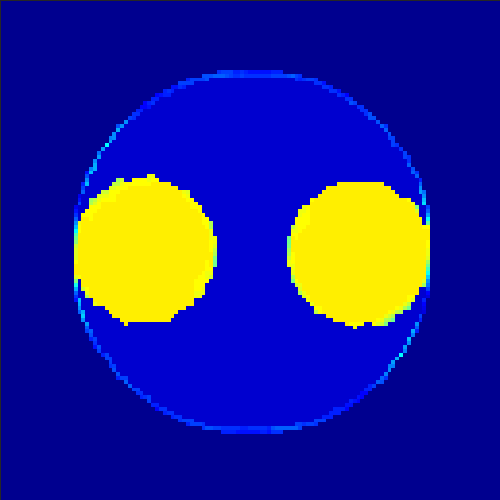}} \
\subfloat[t=25]{\includegraphics[width=2cm]{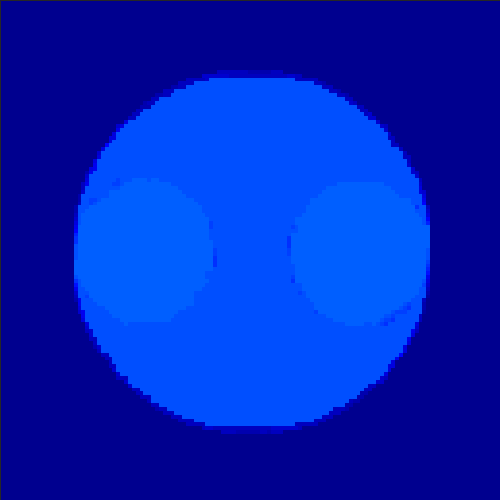}} \
\subfloat[t=50]{\includegraphics[width=2cm]{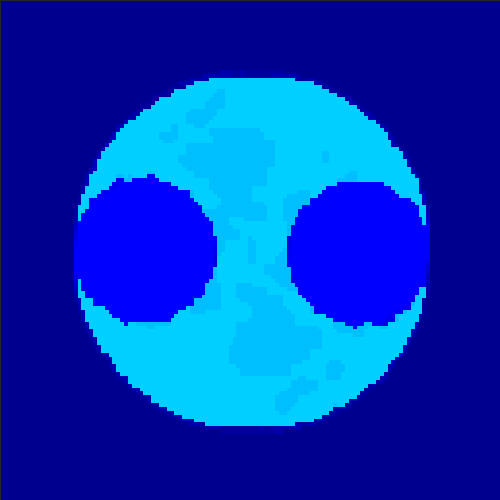}} \
\subfloat[t=90]{\includegraphics[width=2cm]{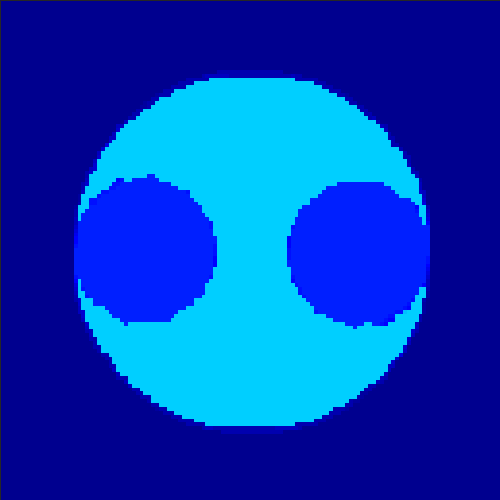}}
\caption{Reconstruction of Monte Carlo simulated data: Exact image sequence (first row), reconstructed solution with $\lambda=20000$ (second row),
reconstructed solution with $\lambda=200000$ (third row)}
\label{fig:MonteCarloResults}
\end{figure}

As one can see, the method is able to reconstruct the regions properly, even in case of a low count number. Within a number of iterations (average of 100 outer and 10000 inner iterations), the
algorithm presents a reasonable reconstruction of the region of interest and the corresponding regional tracer concentration curves. Here, the parameters were not optimized as in the case of the synthesized data
sets in the previous section, but kept fixed as $\alpha=0.5$, $\beta=0.1$ and $\delta=1.5$. With futher optimized parameter values one could possibly provide even better results.

All in all, the proposed method provides promising results that could be able to be applied to several studies in biomedical imaging.


\section{Conclusion and Outlook}
\label{sec:Conclusion}

In this paper we presented a new simultaneous reconstruction approach in dynamic SPECT imaging, derived and implemented a suitable variational model and presented promising results
on synthesized data. The proposed method yields
promising results, which we have demonstrated on both exact and noisy  data. The results were quite plausible and the reconstructed image sequences significantly match the exact ones. The reconstructions prove that the choice of the regularization methods as well as the reconstruction approach is reasonable for a proof-of-concept study. However, some limitations and open questions
arising from this work that could be addressed in future work are the following:

\begin{itemize}

\item Although our method provides promising results for the different data sets, we think that a direct comparison to other existing approaches like the ones mentioned in the first sections may not be significant as the current model does not take into account the motion on the boundary, and those methods use a dictionary of basis functions for the tracer concentration curves or make additional assumptions on the structure of the region of interest. Such a comparison in case of real data would thus be the next step in order to further promote this novel approach.

\item The proposed variational formulation contains at least, as in the constrained version, three regularization parameters. Furthermore, some of the proposed algorithms require additional proximal parameters, which have to be chosen with respect to certain properties of the data and regularization functionals. A future task could be to discuss the parameter choice in detail and maybe to improve the model by eliminating some of them by setting some parameter proportional to another, for instance (e.g. fixing a ratio between $BV$- and $L^1$-regularization).

\item In order to make the approach applicable to real data, one also has to face the problem of extending the idea to three-dimensional real image sequences. Therefore, this would
automatically lead to a computational problem that we may need to address in the future.

\end{itemize}


\section*{Acknowledgements}

This work has been initiated during a stay of CR at Shanghai funded by the Heinrich Hertz Foundation and the China Scholarship Council, whose support is gratefully acknowledged. MB
acknowledges further support by the German Science Foundation DFG through grant BU 2327/6-1 as well as SFB 656 Molecular Cardiovascular Imaging, and by ERC via Grant EU FP 7 - ERC
Consolidator Grant 615216 LifeInverse. The work of XZ is partially supported by NSFC91330102 and Sino-German Center grant (GZ1025), and 973 program (\# 2015CB856004). The authors thank Qiu Huang from Shanghai Jiao Tong University for providing the test rat liver phantom, Frank W\"ubbeling from the Westf\"alische Wilhelms-Universit\"at M\"unster for creating the Monte Carlo
simulation and the anonymous referees for their useful advice.

\section*{References}



\end{document}